\newcommand{\ml}[1]{{\color{blue}{{#1}}}}
\title{Recognizing Leaf Powers and Pairwise Compatibility Graphs is NP-Complete}
\date{}
\author{Max Dupr\'e la Tour\thanks{McGill University: \texttt{maxduprelatour@gmail.com}} \and 
Manuel Lafond\thanks{Universit\'e de Sherbrooke: \texttt{manuel.lafond@usherbrooke.ca}} \and 
Ndiam\'e Ndiaye\thanks{McGill University. \texttt{ndiame.ndiaye@mail.mcgill.ca}} }
\newtheorem{theorem}{Theorem}[section]
\newtheorem{lemma}[theorem]{Lemma}
\newtheorem{lm}[theorem]{Lemma}
\newtheorem{cor}[theorem]{Corollary}
\newtheorem{proposition}[theorem]{Proposition}
\newtheorem{claim}[theorem]{Claim}
\theoremstyle{remark}
\newtheorem*{remark}{Remark}
\DeclareMathOperator{\diam}{diam}
\newcommand{\set}[1]{\left\{#1\right\}}
\DeclareMathOperator*{\argmin}{arg\,min}
\newcommand{\bb}[0]{\mathbb}
\newcommand{\N}{\bb{N}}
\begin{document}

\maketitle

\begin{abstract}
Leaf powers and pairwise compatibility graphs were introduced over twenty years ago as simplified graph models for phylogenetic trees. Despite significant research, several properties of these graph classes remain poorly understood. In this paper, we establish that the recognition problem for both classes is NP-complete. We extend this hardness result to a broader hierarchy of graph classes, including pairwise compatibility graphs and their generalizations, multi-interval pairwise compatibility graphs. 
\end{abstract}

\section{Introduction}

One of the fundamental questions in graph theory is whether a given graph can be represented or described using a simpler or more structured metric space. Graphs derived from metric spaces have been extensively studied. For instance, \emph{unit distance graphs}, usually studied in Euclidean spaces of small dimension, are formed by connecting vertices that are exactly one unit apart. Similarly, \emph{unit disk graphs} connect vertices whose distance is at most one. Expanding on this idea, \emph{annulus graphs} connect vertices whose distances lie within a specific interval $[\theta_1,\theta_2]$.

In this paper, we focus on tree metrics, where the vertex set consists of the leaves of an edge-weighted tree.
Unit disk graphs defined on tree metrics correspond to \emph{leaf powers} (LP), a class of graphs whose vertices are the leaves of some tree and whose edges are pairs of leaves with a distance below a certain threshold in the tree.
The tree is called a \emph{leaf root} and, if $k$ is the threshold in question, the graph is a $k$-leaf power.
Similarly, annulus graphs on tree metrics correspond exactly to \emph{pairwise compatibility graphs} (PCG), and instead require the edges to be pairs of leaves whose distance is between two given thresholds. Leaf powers and pairwise compatibility graphs were introduced over two decades ago \cite{NISHIMURA200269,Kearney2003} as simplified models for phylogenetic trees, aiming to capture the evolutionary relationships between species. 

Outside of computational biology, leaf powers also have algorithmic applications.  
Notably, they form a proper subclass of graphs of \emph{mim-width} $1$, as shown by Jaffke et al. in~\cite{jaffke2019mimIII}\footnote{Graphs of mim-width $1$ admit a branch decomposition in which every bipartition yields a bipartite graph whose maximum induced matching of $1$.}.  Several NP-hard problems become polynomial time solvable on this graph class~\cite{jaffke2020mimI,jaffke2020mimII}, but the complexity of recognizing graphs of mim-width $1$ is open.  It could be NP-hard given the recently established the para-NP-hardness of the mim-width parameter~\cite{bergougnoux2025mim}.  Recognizing leaf powers efficiently would circumvent this by still allowing the use of known algorithms for graphs of bounded mim-width on them.  
Fellows et al.~\cite{fellows2008leaf} also identified algorithmic applications for $k$-leaf powers. They show that deciding whether a $k$-leaf power $G$ satisfies an M1O graph expression, a restriction of monadic second order logic, can be done in linear time if $k$ is fixed and the leaf root is known (it is in fact fixed-parameter tractable in $k$).  
 Moreover, $k$-leaf powers have bounded clique-width and a similar meta-theorem of Courcelle is also applicable~\cite{courcelle2000linear}, although the complexity of the algorithm of Fellows et al. is actually practical.
Their result also generalizes to PCGs whose thresholds are fixed.

Despite their significance, these classes remain poorly understood, and several graph theoretic problems concerning these leaf powers and PCGs remain open. The purpose of this research is to resolve one such long-standing open problem: the computational complexity of their recognition.

This question was partially answered by Lafond~\cite{Lafond2023}, who showed that $k$-leaf powers could be recognized in polynomial time for any fixed $k$.  However, fixing $k$ significantly limits both practical and theoretical application of leaf powers, and the techniques in~\cite{Lafond2023} do not appear to extend to the broader class of leaf powers, nor to PCGs and generalizations.  For leaf powers, Nevries and Rosenke conjectured that leaf powers could be characterized as the strongly chordal graphs that forbid one of seven induced subgraphs, which would immediately lead to a polynomial time algorithm~\cite{nevries2016towards}.  Lafond disproved this in~\cite{lafond2017strongly} by constructing an infinite family of non-strongly chordal obstructions.  The author also showed that deciding whether a chordal graph contains one of these obstructions is NP-complete.  This does not imply the hardness of recognizing leaf powers, and it would seem that other ideas are needed to solve this two decades old problem.

In this work, we finally establish that the recognition problems for both leaf powers and pairwise compatibility graphs are NP-complete. Furthermore, we extend this result to a broader hierarchy of graph classes, which we term \emph{generalized leaf powers}. A generalized leaf power of order $ q $ is a graph whose vertex set consists of the leaves of a positively weighted tree, called the \emph{generalized leaf root of order $q$} of that graph. There exist $ q $ distinct thresholds $ \theta_1, \dots, \theta_q $ such that two vertices are connected in the graph if and only if the distance between them in the tree is less than an odd number of these thresholds. This hierarchy corresponds precisely to \emph{multi-threshold} graphs defined on tree metrics rather than star metrics, where a star metric requires the vertex set to consist of the leaves of an edge-weighted star. Multi-threshold graphs were introduced in \cite{JamisonSpragueMultiT} as an extension of unit disk and annulus graphs on star metrics. For even $q$, $q$-generalized leaf powers correspond to the multi-interval PCG introduced in \cite{MultiPCG}. Note that $1$-generalized leaf powers are exactly leaf powers, and $2$-generalized leaf powers are exactly PCG. Our main result is the following theorem:

\begin{restatable}{thm}{main}\label{thm:main}
    For any integer $q\geq 1$, the $q$-generalized leaf powers recognition problem is NP-complete.
\end{restatable}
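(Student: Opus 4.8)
The plan is to establish NP-hardness by a reduction from a suitably structured NP-complete problem (membership in NP is routine: a witness is the tree, the thresholds, and a polynomial-size certification of the edge conditions, all verifiable in polynomial time by computing leaf-to-leaf distances). The natural source problem is something like a restricted variant of SAT, graph coloring, or a "betweenness"/ordering problem — I would look for a problem whose instances can be encoded into local distance constraints among leaves of a tree. The core difficulty is that a $q$-generalized leaf power must be realized simultaneously by \emph{one} tree and \emph{one} set of $q$ thresholds governing all pairs via the "odd number of thresholds exceeded" rule, so the reduction gadgets must force the tree's metric structure to encode the combinatorial choices of the source instance, while the parity-of-thresholds condition has to be tamed so that it effectively behaves like a single clean distance cutoff on the gadget leaves.

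First I would design \emph{rigidity gadgets}: small induced subgraphs whose only (or essentially only) realizations as $q$-generalized leaf roots force certain leaves to sit at prescribed relative distances, so that in any valid realization the tree looks, on those vertices, like a canonical shape. A key subgoal here is to show that one can confine all "interesting" pairwise distances to a narrow band in which only the largest threshold $\theta_q$ (or a fixed subset of thresholds) is relevant, collapsing the parity rule to an ordinary single-threshold leaf-power-type condition locally; this likely uses a "padding" construction attaching heavy pendant structures so that distances far from $\theta_q$ are impossible. Then I would build \emph{choice gadgets} encoding the binary (or $k$-ary) decisions of the source instance — e.g. which of two branches a leaf attaches to — and \emph{consistency/clause gadgets} that are realizable iff the chosen assignment satisfies the corresponding constraint, wiring these together along a backbone path in the tree.

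With the gadgets in hand, the proof splits into the two standard directions. For soundness ("yes instance $\Rightarrow$ graph is a $q$-generalized leaf power"), I would explicitly construct the tree and the thresholds from a satisfying assignment and verify every adjacency and non-adjacency in $G$ by a distance computation, checking the parity condition holds with the intended value. For completeness ("graph is a $q$-generalized leaf power $\Rightarrow$ yes instance"), I would take any realizing tree and thresholds, apply the rigidity lemmas to argue the tree must decompose along the gadget structure, read off an assignment from the choice gadgets, and show the clause gadgets being realizable forces the assignment to satisfy the source constraints. Finally, for the "$q \geq 2$" (PCG and multi-interval) cases versus the $q=1$ (leaf power) case, I expect either a single parametrized construction handling all $q$ at once, or a base construction for $q=1$ together with a padding argument showing that adding $q-1$ extra, harmlessly large thresholds does not change realizability — I would prove that the added thresholds can always be placed beyond all relevant distances, so the hierarchy collapses for the hard instances.

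The main obstacle, I anticipate, is the completeness direction: the $q$-generalized leaf power definition gives an adversary enormous freedom in choosing the tree topology, the edge weights, and the $q$ thresholds, and the parity rule means an edge can be "explained" in many combinatorially distinct ways (exceeding $1$, $3$, $5$, \dots thresholds). Ruling out all unintended realizations — in particular showing the adversary cannot exploit the parity condition to satisfy clause gadgets while violating the intended assignment semantics — is where the bulk of the technical work will lie, and the rigidity gadgets will need to be engineered carefully (probably with quantitative slack bounds on weights) so that no such "parity cheating" is possible.
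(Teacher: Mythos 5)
Your proposal is a research plan rather than a proof: the entire content of the hardness direction is deferred to ``design rigidity/choice/clause gadgets,'' which is precisely where all of the difficulty lies. The paper's actual reduction is from the \emph{triangle ordinal clustering} problem (a distance-ordering problem on tree metrics, so your guess of a ``betweenness/ordering problem'' as the source is in the right neighbourhood), and the machinery that makes it work is quite specific: each element is doubled into $i^+,i^-$, each pair $(x,y)$ gets a witness vertex $u_{x,y}$ whose adjacencies encode which elements are closer to $x$ than $y$ is, a special vertex $O$ forces global proximity, and the converse direction is driven by two consequences of the four point condition for tree metrics. None of this is recoverable from your outline, so the core of the theorem remains unproved in your write-up.

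Two of your concrete claims are also wrong as stated. First, NP membership is not ``routine'' for $q\geq 2$: a witness tree could a priori need edge weights and thresholds with exponentially many bits, and the usual linear-programming certificate fails because the parity-of-thresholds condition puts distances into a union of disjoint intervals, which is not expressible by linear inequalities. The paper has to prove a quantitative bound (weights at most $|E_T|^{|E_T|/2}$ via Cramer's rule and Hadamard's inequality, after contracting degree-2 vertices) to get a polynomial-size certificate. Second, your padding argument for lifting $q=1$ hardness to general $q$ — ``adding $q-1$ harmlessly large thresholds does not change realizability'' — breaks on the parity rule: a threshold larger than every leaf-to-leaf distance flips the parity for \emph{every} pair, so it complements the graph rather than leaving it unchanged (a threshold below all distances leaves it unchanged, but then you still owe the converse: a $GLP(q+1)$ root for the padded instance need not place any threshold trivially). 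The paper instead reduces $GLP(q)$ to $GLP(q+1)$ by mapping $G$ to $(G_1\cup G_2)^C$ for two disjoint copies of $G$, and the converse of that reduction requires a separate argument showing that if some edge of each copy exceeds the top threshold, one extracts an induced $4$-cycle realized as a leaf power, a contradiction with chordality. You would need an idea of comparable strength to close this step.
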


\begin{cor}
    The following problems are NP-complete:
    \begin{itemize}
        \item Recognition of leaf powers.
        \item Recognition of pairwise compatibility graphs.
        \item Recognition of multi-interval PCGs with $k$ intervals for any constant $k \geq 2$.
    \end{itemize}
\end{cor}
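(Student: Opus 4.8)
The plan is to derive the corollary directly from \Cref{thm:main} by instantiating $q$ at the three relevant values and invoking the correspondences between $q$-generalized leaf powers and the three named classes, already announced in the introduction. Since \Cref{thm:main} asserts NP-completeness, i.e.\ both NP-membership and NP-hardness of $q$-generalized leaf power recognition, it suffices for each target class to verify that it coincides with the $q$-generalized leaf powers for the appropriate $q$, and that this coincidence is effective: a certificate for one representation can be converted in polynomial time into a certificate for the other, so that both NP-membership and the hardness of recognition transfer.

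First I would handle leaf powers with $q=1$. A graph is a $1$-generalized leaf power iff there is a positively weighted tree $T$ on the vertex set as leaves and a threshold $\theta_1$ with $uv \in E$ iff $\lvert\{\,i : d_T(u,v) < \theta_i\,\}\rvert$ is odd, which with a single threshold just means $d_T(u,v) < \theta_1$. After clearing denominators and rescaling the weights to integers (a standard argument shows rational, hence polynomially encodable, weights suffice whenever any weighting does), then subdividing each edge of weight $w$ into $w$ unit edges, this becomes exactly the usual $k$-leaf power condition $d_T(u,v) \le k$ on an unweighted tree; conversely a $k$-leaf root already is such a weighted $T$. Hence leaf power recognition is NP-complete by \Cref{thm:main} with $q=1$. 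For pairwise compatibility graphs I would take $q=2$ and order $\theta_1 < \theta_2$: the parity condition then holds iff exactly the single threshold $\theta_2$ exceeds $d_T(u,v)$, i.e.\ iff $\theta_1 \le d_T(u,v) < \theta_2$, which is the PCG condition $d_{\min} \le d_T(u,v) \le d_{\max}$ up to the half-open/closed convention at the endpoints, removable by an arbitrarily small perturbation of the weights (ensuring no leaf-to-leaf distance equals a threshold). So PCG recognition is NP-complete by \Cref{thm:main} with $q=2$.

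Finally, for multi-interval PCGs with a constant number $k \ge 2$ of intervals I would take $q = 2k$ and order $\theta_1 < \dots < \theta_{2k}$: if $d_T(u,v)$ lies in $[\theta_j,\theta_{j+1})$ then $2k-j$ thresholds exceed it, which is odd iff $j$ is odd, so the edge set is precisely the union of the $k$ half-open intervals $[\theta_1,\theta_2) \cup [\theta_3,\theta_4) \cup \dots \cup [\theta_{2k-1},\theta_{2k})$ of realized distances. Conversely, given a $k$-interval PCG representation (merging overlapping intervals, clipping an unbounded last interval at the diameter, and re-padding back up to $k$ intervals as needed), choosing each $\theta_{2j-1}$ at the left endpoint of the $j$-th interval and each $\theta_{2j}$ just past its right endpoint but below the next interval and below the next realized distance reproduces exactly the same graph, again after a harmless perturbation. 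Thus $k$-interval PCG recognition is NP-complete for every constant $k \ge 2$ by \Cref{thm:main} with $q = 2k$.

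The only points requiring genuine care — and hence the (mild) main obstacle — are the boundary subtleties just mentioned, namely open-versus-closed interval conventions and the genericity assumption that no leaf-to-leaf distance coincides with a threshold, together with the assertion that a polynomially encodable weighting always exists so that NP-membership really does transfer to these classes. All of these are dealt with by the same rescaling/perturbation bookkeeping and the small-witness lemma underpinning \Cref{thm:main}, so no new ideas beyond that theorem are needed.
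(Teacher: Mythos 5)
Your proposal is correct and follows essentially the same route the paper intends: the corollary is derived directly from Theorem~\ref{thm:main} by instantiating $q=1$, $q=2$, and $q=2k$ and invoking the class equivalences $GLP(1)=$ leaf powers, $GLP(2)=$ PCGs, $GLP(2i)=$ $i$-interval PCGs, with the integrality/rescaling point handled exactly as in the paper's remark following the definition of generalized leaf powers. The boundary-convention and perturbation bookkeeping you flag is the same "finite precision, rescale to integers" observation the paper relies on, so no further comment is needed.
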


For leaf powers, we even obtain a slightly better result.  Recall that encoding an integer $k$ in unary means that it is written as a sequence of $k$ times the symbol $1$ (which makes $k$ require $k$ bits in the input, as opposed to $O(\log k)$ bits if it is encoded in binary).

\begin{restatable}{theorem}{unary}
The unary-$ k $-leaf power recognition problem, where we are given a graph $ G $ and a threshold $ k $ encoded in unary and must determine whether $ G $ is a $ k $-leaf power, is NP-complete.
\end{restatable}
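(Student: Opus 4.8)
The plan is to prove the unary statement as a refinement of the proof of \Cref{thm:main} in the case $q=1$. Membership in NP is immediate: given $G$ and $k$ in unary, a witness is a leaf root $T$; since the NP-completeness proof for $q=1$ must already produce leaf roots whose size is polynomial in $|V(G)|$ and $k$, verifying that every pair of leaves of $T$ is at distance $< k$ iff adjacent in $G$ takes polynomial time. The only point to check is that $T$ can be assumed to have polynomial size in the unary encoding of the input, i.e. polynomial in $|V(G)| + k$; one argues that edge weights may be taken to be positive integers bounded by a polynomial in $k$ (rescaling and rounding, or directly from the construction), so $T$ has a polynomial-size description. So the content is the hardness direction.

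For hardness, I would revisit the reduction used for $q=1$ (leaf powers) and observe what it actually produces. In the general theorem the threshold $k$ is presumably \emph{not} fixed — otherwise Lafond's algorithm~\cite{Lafond2023} would contradict hardness — so the reduction already builds instances $(G,k)$ of leaf-power recognition with $k$ growing with the instance size. The extra claim here is simply that in this reduction $k$ is polynomially bounded in $|V(G)|$, hence its unary encoding only blows up the instance polynomially, so the reduction is still polynomial when $k$ is written in unary. Concretely, I would trace through the gadget construction: each gadget contributes a bounded number of vertices and the distances enforced are sums of a bounded number of ``unit-scale'' weights, so the relevant threshold is $O(\mathrm{poly}(n))$ rather than, say, exponential. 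If the original construction happens to use large (binary-encoded) weights, I would rescale: replace the weight function by one taking integer values, and if necessary subdivide or perturb to keep all pairwise leaf distances on the correct side of the threshold while keeping $k = O(\mathrm{poly}(n))$.

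The main obstacle is controlling the magnitude of $k$. Leaf-power constructions often rely on carefully separated distance scales (e.g. ``far'' vs. ``near'' pairs), and a naive gadget design can force exponentially large thresholds to keep the scales from interfering. I expect the fix to be a careful accounting showing the number of distinct distance scales needed is only $O(\log n)$ or even $O(1)$ in the reduction, so that integer weights of polynomial size suffice; alternatively, one shows the construction can be arranged with all leaf roots being subdivisions of trees with unit weights, in which case $k$ is literally a distance in an unweighted tree and hence at most the number of vertices. Once $k = O(\mathrm{poly}(n))$ is established, the unary encoding is only polynomially larger than the binary one, the reduction from the same NP-hard source problem goes through verbatim, and NP-completeness of unary-$k$-leaf-power recognition follows. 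I would present this as a corollary of the $q=1$ construction with a short lemma bounding the threshold, rather than re-deriving the whole reduction.
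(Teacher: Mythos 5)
Your overall strategy matches the paper's: show the threshold can be taken polynomial in the instance size, so the unary encoding costs only a polynomial blow-up. But there are two concrete gaps in where you locate the polynomial bound and how you fix a single threshold.

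First, the bound on $k$ does \emph{not} come from ``tracing through the gadget construction.'' In the reduction, the graph $G_S$ depends only on $(S,\prec_S)$, and the threshold in the completeness direction is $10\diam(T')-1$, where $T'$ is a tree \emph{realizing} the TOC instance. That tree is not part of the construction; it is an unknown witness, and for an arbitrary realizable triangle order its diameter could a priori be enormous. The argument you need is a property of the \emph{source instances}: Shah and Farach-Colton's NP-hardness reduction for TOC produces instances that, when realizable, are realizable by trees whose size and edge weights are polynomial in $|S|$, hence of diameter at most $|S|^c$ for some constant $c$. Without invoking this fact about the specific TOC instances, your plan to bound the ``number of distance scales in the gadgets'' does not get off the ground, since the gadget weights are themselves defined in terms of $\diam(T')$. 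Your fallback of subdividing into unit-weight edges has the same problem: the number of subdivision vertices is the edge weight, which is exactly the quantity you have not yet bounded.

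Second, even granting $\diam(T')\le |S|^c$, the reduction must output one fixed value of $k$, whereas the completeness direction only shows $G_S$ is a $(10\diam(T')-1)$-leaf power for the (unknown, possibly smaller) diameter of the actual realizing tree. The paper bridges this by setting $k=10|S|^c-1$ and using the fact that a $k$-leaf power is also a $(k+2)$-leaf power, so one can pad $10\diam(T')-1$ up to $10|S|^c-1$ (the two values have the same parity). Your proposal does not address how a single announced threshold is made to work uniformly; this step, together with the citation to the structure of the Shah--Farach-Colton instances, is the actual content of the proof beyond the $q=1$ reduction.
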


In order to prove that the $q$-generalized leaf powers recognition problem is in NP for all $q$, we prove that if a graph with $n$ vertices is a $q$-generalized leaf power, there exists a corresponding generalized leaf root of order $q$ such that all the weights of the tree are integers, and bounded by $(2n)^n$ . For $q=1$ (leaf powers), the smallest threshold $\theta$ such that a graph can be defined as the leaf power of a integer weighted tree using $\theta$ as the threshold is called the \emph{leaf rank} of the graph. It directly follows that the leaf rank of a graph with $n$ vertices is bounded by $n \cdot (2n)^n$.

\begin{restatable}{thm}{lrank}
    A leaf power with $n$ vertices has a leaf rank less than $n \cdot (2n)^n$.
\end{restatable}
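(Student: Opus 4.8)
The plan is to obtain this bound as an almost immediate corollary of the quantitative NP-membership result proved earlier in the paper: specialized to $q=1$, it says that any leaf power $G$ on $n$ vertices has an integer-weighted leaf root $(T,w)$, together with an integer threshold, in which every edge weight is at most $(2n)^n$. I would invoke this as a black box; the only remaining work is to convert a bound on the edge \emph{weights} into a bound on the \emph{threshold}, since the leaf rank is defined through the threshold rather than through the weights.

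First I would use the standard normalization that the leaf root may be assumed to have no internal vertex of degree $2$: contracting such a vertex and adding the two incident weights preserves all leaf-to-leaf distances, and this reduction is compatible with the weight-bounding argument, so we keep weights at most $(2n)^n$. In a tree with $n$ leaves and no internal vertex of degree $2$, every path between two leaves uses at most $n-1$ edges: each of its interior vertices has degree $\ge 3$ and hence contributes a private leaf in the subtree hanging off the path, and these leaves together with the two endpoints of the path are $n$ distinct leaves, so the path has at most $n-2$ interior vertices. Consequently $d_T(u,v)\le (n-1)(2n)^n$ for all leaves $u,v$, and in particular for every adjacent pair of $G$.

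Second, I would push the threshold down to just past the largest distance realized by an edge. If $E(G)=\varnothing$ (or $n\le 1$) the claim is trivial, as a star with unit weights and threshold $1$ realizes $G$; so assume $E(G)\neq\varnothing$ and set $M=\max\{\,d_T(u,v):uv\in E(G)\,\}$. From the root handed to us with its integer threshold $\theta_0$, every edge $uv$ satisfies $d_T(u,v)<\theta_0$ (so $M\le\theta_0-1$) and every non-edge satisfies $d_T(u,v)\ge\theta_0$. Then $\theta:=M+1$ already realizes $G$ with the \emph{same} tree: an edge $uv$ has $d_T(u,v)\le M<\theta$, while a non-edge has $d_T(u,v)\ge\theta_0\ge M+1=\theta$. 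Hence $G$ is a leaf power with integer root $(T,w)$ and threshold $M+1\le (n-1)(2n)^n+1<n(2n)^n$, and since the leaf rank is the least feasible integer threshold over all integer-weighted roots, it is strictly less than $n(2n)^n$.

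I expect no real obstacle in the manipulations above; the substance lives entirely in the cited weight bound. Were that not already established, the route I would take is polyhedral: fix a reduced combinatorial tree $T$ (which has at most $2n-3$ edges), write "$(T,w)$ with threshold $\theta$ realizes $G$" as a linear system in the variables $(w_e)_{e\in E(T)}$ and $\theta$ whose coefficients all lie in $\{-1,0,1\}$, observe that it is feasible over $\mathbb{R}$ by rescaling any real leaf root so that all weights are $\ge 1$ and the edge inequalities have slack $\ge 1$, take a vertex of the resulting pointed polyhedron, and bound its coordinates by Cramer's rule together with Hadamard's inequality applied to the $\{-1,0,1\}$-matrix of order at most $2n$; clearing the common denominator yields an integer root with weights at most $(2n)^n$. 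Controlling the arithmetic of the realizing tree in this way is the genuine difficulty, and the leaf rank bound then follows as sketched above.
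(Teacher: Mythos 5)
Your proposal is correct and follows essentially the same route as the paper: the theorem is obtained there as an immediate consequence of Claim~\ref{cl:numbervertices} and Lemma~\ref{lem:weight-bound} (the Cramer/Hadamard weight bound you re-derive) together with the observation that a degree-2-free root with $n$ leaves has leaf-to-leaf paths of at most $n-1$ edges, hence distances at most $(n-1)(2n)^n$. The only cosmetic difference is your strict-inequality convention for the threshold (the paper uses $d_T(u,v)\le\theta$ for edges, so one would take $\theta=M$ rather than $M+1$), which does not affect the bound.
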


\subsection{Overview of the Reduction}

We prove that the \emph{triangle ordinal clustering} (TOC) problem, shown to be NP-complete in~\cite{SF06}, can be reduced to the recognition of leaf powers.  A chain of reductions then implies hardness for their generalizations.
In the TOC problem, we are given a partial order $\prec_S$ on the pairs of elements of a set $S$.  
Specifically, $\prec_S$ specifies triangle orders, meaning that for each $i, j, k \in S$, it gives a total order on the triple $\{i, j\}, \{i, k\}, \{j, k\}$ (for instance, $\{i, j\} \prec_S \{i, k\} \prec_S \{j, k\}$).  The goal is to determine whether there is a tree whose vertex set contains $S$, and such that the pairwise distances in the tree agree with the partial order. 

Let us first emphasize that, to our knowledge, this is the first use of the TOC problem in an NP-hardness reduction.  There are several other types of graph-to-tree embedding problems whose complexity remains open, and it is possible that the techniques developed here could  accelerate progress on some other difficult complexity questions.  One notable example is the recognition of mim-width $1$ graphs mentioned above, and several other similar problems are stated in~\cite{hogemo2025mapping}.

The TOC problem differs significantly from leaf power recognition, as its distance constraints are not expressed in terms of global ``close'' versus ``far'' distances.  Rather, in TOC, each $v \in S$ provides its own local view of elements that are closer or farther to itself.  That is, if we consider two other elements $x, y$, then $v$ knows which of $x$ or $y$ should be closer, but each triple allows a different threshold of closeness.  
Nonetheless, from $(S, \prec_S)$ we are able to construct a graph $G$, an instance of leaf power recognition, with gadgets that enforce 
satisfying these local $\prec_S$ rankings in a leaf root.  For each $v \in S$, $G$ contains a corresponding set of vertices $U_v$ used as a proxy for the desired ranking of $v$.  Given a tree $T$ satisfying our partial order $\prec_S$, we are able to construct a leaf root $T'$ of $G$ containing $T$ 
such that each $U_v$ induces a weighted star that orders leaves as in the ranking desired by $v$.  Conversely, if a leaf root of $G$ exists, it must essentially organize these vertices in the same manner, with the desired ranking of $v$, and can be converted to a tree that satisfies $\prec_S$.

The exact machinery required to achieve this is difficult to describe succinctly here, but among the new tools developed,  we derive and use several consequences of the well-known \emph{four point condition}—a property characterizing tree metrics—which may have applications to other problems.  
In our case, we use it to guarantee that the local $U_v$ stars described in $T'$ above are consistent with $v$ from the original tree $T$, i.e., when transforming $T$ into $T'$, 
we can guarantee that if $v$ is closer to a vertex $x$ than to a vertex $y$, then all vertices from a set $U_v$ are closer to $x$ than to $y$.

\subsection{Related work}

\paragraph{Bounds on the leaf rank:} A recent paper \cite{LowerBoundLR} describes a family of graphs that are leaf powers but have an exponential leaf rank. Specifically, the leaf rank of this family is in $\Omega(2^{n/4})$. 
The conclusion of~\cite{LowerBoundLR} and~\cite[Chapter 11]{hogemo2025mapping} also mention an upper bound of $2^{n^c}$ on the leaf rank of any leaf power, for some unspecified constant $c$ based on the complexity of solving linear programs.  Our upper bound of $n (2n)^n \in 2^{O(n \log n)}$ brings us closer to the lower bound, although there is still a gap.

\paragraph{Algorithmic results:}  Prior to Lafond's algorithm for $k$-leaf powers, polynomial-time algorithms were known for every $k \leq 6$~\cite{brandstadt2006structure,brandstadt2008structure,chang20073,ducoffe20194}.  
For $k \leq 4$, the algorithms follow from a characterization of $k$-leaf power in terms of chordality and a finite set of forbidden induced subgraphs, but this algorithmic avenue was shown impossible for $k \geq 5$~\cite{dupre2024k}.
For leaf powers without a fixed $k$, it was shown in~\cite{brandstadt2010rooted} that leaf powers admitting a caterpillar leaf root 
(without edge weights) could be recognized in polynomial time, as they coincide with unit interval graphs.  This result was extended in~\cite{benjamin2022recognition}, where  a polynomial time algorithm is given to recognize leaf powers that admit an edge-weighted caterpillar leaf root.  The authors also have a polynomial time algorithm for the class of leaf powers admitting a NeS
model where the main embedding tree is a star (see definitions therein).  
In a different direction, the authors of~\cite{le2023computing} ask for the leaf rank of a given leaf power, and show that this can be computed in linear time for trivially perfect graphs.  

As for PCGs and other generalizations of leaf powers, the only positive result we are aware of is a polynomial-time algorithm for PCGs whose corresponding tree is an edge-weighted star~\cite{xiao2020characterizing,kobayashi2022linear}.  Also worth pointing out, the NP-completenes of finding leaf power obstructions in chordal graphs mentioned above builds upon another ``tree conflict'' problem, called Maximum Quartet Compatibility~\cite{steel1992complexity}, which has similarities with TOC.  Although this problem has apparently failed to lead to a hardness proof for leaf powers, it is also a good candidate for future reductions.

Let us also mention that leaf powers are known to be sandwiched between rooted directed path graphs and strongly chordal graphs~\cite{brandstadt2010rooted}, which are two polynomial-time recognizable classes.  As for PCGs, almost no interaction with graph classes is known except for restricted cases, for example PCGs of star graphs~\cite{xiao2020characterizing}.

\paragraph{Max-Generalized-$PCG$-Recognition:} To our knowledge, the only established hardness result on leaf powers and their generalizations is on a specific variant of the $PCG$ recognition problem. 
In the \emph{Max-Generalized-$PCG$-Recognition} problem, we are given an integer $q$ and two graphs $G_1 = (V, E_1)$ and $G_2 = (V, E_2)$ on the same set of vertices, such that $E_1 \subset E_2$. The question is whether there exists a $PCG$-graph $G = (V, E)$ with $E_1 \subset E \subset E_2$ and a tree using thresholds $\theta_1, \theta_2$, such that for at least $q$ of the non-edges, the distance in the tree between the two endpoints of the non-edge is greater than $\theta_2$.  The Max-Generalized-$PCG$-Recognition problem was introduced and proven to be NP-hard in \cite{durocher2015graphs}. This problem can be viewed as a restriction of the $PCG$-\emph{Sandwich} problem\footnote{For any graph class $\mathcal{C}$, the $\mathcal{C}$-Sandwich problem is defined as follows: Given two graphs $G_1 = (V, E_1)$ and $G_2 = (V, E_2)$ sharing the same set of vertices, with $E_1 \subset E_2$, is there a graph $G = (V, E)$ such that $E_1 \subset E \subset E_2$ and $G$ belongs to $\mathcal{C}$?}.
Even for some class of graphs $\mathcal{C}$ that can be recognized in polynomial time, the $\mathcal{C}$-Sandwich problem might still be NP-hard. An example is the class of chordal graphs, as demonstrated in \cite{GOLUMBIC1995449}.







\section{Generalizing Leaf Powers}
Let's begin by formally defining leaf powers. Let $G = (V, E)$ be a simple, finite graph. All graphs in this work are undirected, but for clarity we may write edges in $E$ as ordered pairs $(u, v)$, with the understanding that edges are undirected.
We say that $G$ is a \emph{leaf power} if there exists a tree $T$ and an integer threshold $\theta$ such that:

\begin{itemize} 
    \item The vertex set $V$ is the set of the leaves of $T$. 
    \item For any pair of distinct vertices $u, v \in V$, there is an edge $(u,v) \in E$ if and only if the distance between $u$ and $v$ in $T$, denoted as $d_T(u, v)$, is at most $\theta$. 
\end{itemize}

Here, $d_T$ represents the distance metric induced by the tree $T$, with two adjacent vertices having a distance of~$1$. Such a tree is called a \emph{leaf root} of $G$. For a leaf power $G$, the smallest possible threshold of a leaf root of $G$ is called the \emph{leaf rank} of $G$.

We now extend this definition to introduce a generalized version of leaf powers, which incorporates weighted edges and multiple thresholds. A graph $G = (V,E)$ is called a \textit{generalized leaf power of order $q$} if there exists a positively edge-weighted tree $(T, w)$ and an increasing sequence of thresholds $\theta = (\theta_1, \dots, \theta_q)$ such that:

\begin{itemize} 
    \item The vertex set $V$ is the set of the leaves of $T$.
    \item For any pair of distinct vertices $u, v \in V$, there is an edge $(u,v) \in E$ if and only if $d_T(u, v) \leq \theta_i$ for an odd number of thresholds $\theta_i$, where $d_T$ is the distance induced by the weight function $w$. 
\end{itemize}

Such a weighted tree is called a \emph{generalized leaf root of order $q$}.
We denote the set of generalized leaf powers of order $q$ by $GLP(q)$.

\begin{remark}
    All comparisons of distances require a finite number of bits of precision. Therefore, by rescaling, the weights and thresholds in the definition can be assumed to take integer values without loss of generality.    Consequently, the classes of Generalized leaf powers correspond to previously studied classes of graphs:

    \begin{itemize} 
        \item $GLP(1)$ is precisely the class of leaf powers.
        \item $GLP(2)$ coincides with the class of pairwise compatibility graphs (PCGs).
        \item For all $i \in \mathbb{N}$, $GLP(2i)$ corresponds to the class of $i$-interval PCGs.
    \end{itemize}
    
    The class $GLP(q)$ diverges from previously existing graph classes only for odd values of $q$. However, these classes will naturally emerge in some of the following proofs.

    Also notice that the weights on tree edges are mostly for convenience, since an edge of weight $c$ can be replaced with a subpath with $c$ edges, which preserves all distances.  Although the resulting tree may be of exponential size, requiring unweighted trees would not change our results on NP membership, see remark at the end of the next subsection.
\end{remark}
\subsection{Basic Properties}

We will now show that the $GLP(q)$ recognition problem is in NP for all $q$. It is not too difficult, though not entirely trivial, to show that LP-recognition is in NP: one can use an unweighted tree as a certificate, and verify it using a linear program whose variables are the edge lengths of that tree, and whose constraints enforce that the sum of edge weights on each leaf-to-leaf path is above or below a threshold (the latter also being a variable).  This does not work for $GLP(q)$ when $q \geq 2$, since distances can now belong to multiple ranges, which cannot be represented using linear inequalities. We therefore develop an approach that generalizes to all orders of GLP.

\begin{claim}\label{cl:numbervertices}
    Let $G\in GLP(q)$ with $n$ vertices. There exists a leaf root of order $q$ of $G$ with at most $2n-1$ vertices.
\end{claim}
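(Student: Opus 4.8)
The plan is to start from an arbitrary generalized leaf root $(T,w)$ of order $q$ of $G$ and repeatedly contract or suppress structure that does not affect the pairwise distances among the $n$ leaves, until the tree has no ``wasted'' vertices. First I would suppress every internal vertex of degree $2$: if $x$ has exactly two neighbors $a,b$, replace the path $a$–$x$–$b$ by a single edge $ab$ of weight $w(xa)+w(xb)$; this is a positive weight and it preserves $d_T(u,v)$ for every pair of leaves, hence preserves the edge set of $G$. After exhaustively applying this, $T$ has no internal vertex of degree $2$. Next I would delete any leaf of $T$ that is not a vertex of $G$ (a ``blind'' leaf), since removing it changes no leaf-to-leaf distance; after this step every leaf of $T$ is a vertex of $G$, so $T$ has exactly $n$ leaves. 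We may also assume $T$ has at least one internal vertex (the case $n \le 2$ is trivial, giving $\le 3$ vertices, which is $\le 2n-1$ once $n\ge 2$; for $n=1$ take the single vertex).

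Now I would bound the internal vertices. In a tree with $n \geq 2$ leaves in which every internal vertex has degree at least $3$, the number of internal vertices is at most $n-1$. This is the standard counting argument: if there are $L$ leaves and $I$ internal vertices, then $2(L+I-1) = \sum_v \deg(v) \geq L + 3I$, which rearranges to $I \leq L - 2 \leq n-1$. Combining, the total number of vertices is $L + I \leq n + (n-1) = 2n-1$, as claimed.

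The one point that needs a little care — and the step I expect to be the main (minor) obstacle — is verifying that each reduction operation genuinely preserves membership: suppressing a degree-$2$ vertex must not create a multi-edge or a zero/negative weight, and deleting blind leaves must not disconnect the part of the tree spanning $V$. Both are easy: suppression merges a path into one edge between distinct endpoints (no multi-edge, and the new weight is a sum of positive weights, hence positive), and the leaves of $G$ always remain in a single connected subtree. One should also note these operations terminate, since each strictly decreases the number of vertices of $T$. With all leaf-to-leaf distances unchanged throughout, the resulting $(T',w')$ with the same thresholds $\theta$ is still a generalized leaf root of order $q$ of $G$, and it has at most $2n-1$ vertices. $\qed$
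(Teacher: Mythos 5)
Your proof is correct and follows essentially the same route as the paper: suppress degree-$2$ vertices by merging their incident edge weights, then count vertices in a tree with $n$ leaves and no internal vertex of degree $2$. The only difference is that your ``blind leaf'' deletion step is vacuous here, since by the definition of a generalized leaf root the leaf set of $T$ is exactly $V(G)$; otherwise the argument matches the paper's (which is terser but identical in substance).
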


\begin{proof}
    Consider a leaf root of order $q$ of $G$ with a minimal number of vertices. If it has a degree 2 vertex $u$, with neighbors $v$ and $w$, then we can contract $u$ set the weight of $(v,w)$ to be $w(u,v)+w(u,w)$ and this does not change the distances between the leaves. So the minimal leaf root of order $q$ of $G$ has no degree 2 vertices. Since it has $n$ leaves, this implies it has at most $2n-1$ vertices.
\end{proof}

 The next lemma guarantees the existence of a generalized leaf root with edge weights and thresholds that can be written using a polynomial number of bits.

\begin{lemma}\label{lem:weight-bound}
    Let $G \in GLP(q)$, and let $(T, w)$ be a generalized leaf root of $G$ of order $q$ with the corresponding sequence of thresholds $\theta = (\theta_1, \dots, \theta_q)$. There exists a new weight function $w'$ on the edges of $T$, along with a new sequence of thresholds $\theta' = (\theta'_1, \dots, \theta'_q)$, such that $(T, w')$ is also a generalized leaf root of $G$ with respect to $\theta'$, such that the weights $w'$ and thresholds $\theta'$ are integers, and such that $w'$ is upper bounded by $|E_T|^{|E_T|/2}$.
\end{lemma}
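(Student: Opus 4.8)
The goal: given a generalized leaf root $(T, w)$ with thresholds $\theta$, find integer weights $w'$ and integer thresholds $\theta'$ realizing the same graph, with $w'$ bounded by $|E_T|^{|E_T|/2}$.

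The key combinatorial observation: the graph $G$ is determined by $(T, w, \theta)$ only through, for each pair of leaves $u, v$ and each threshold $\theta_i$, whether $d_T(u,v) \le \theta_i$ or $d_T(u,v) > \theta_i$. So what matters is the "sign pattern" of the quantities $d_T(u,v) - \theta_i = \left(\sum_{e \in P_{uv}} w(e)\right) - \theta_i$, where $P_{uv}$ is the (unique) path between $u$ and $v$ in $T$. Each such quantity is a linear form in the variables $(w(e))_{e \in E_T}$ and $(\theta_i)_{i}$ — that is, a vector with entries in $\{0, 1\}$ (a $0/1$ incidence vector of a path, augmented with a single $-1$ in the threshold coordinate). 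The vector $(w, \theta)$ is a point in $\mathbb{R}^{|E_T| + q}$ lying in a particular cell of the hyperplane arrangement defined by all these $\le n^2 \cdot q$ linear forms. My plan is to show this cell contains an integer point of small norm.

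I would proceed as follows. Let $N = |E_T| + q$ be the number of variables. The feasible region — the set of $(w, \theta)$ inducing exactly the graph $G$ together with the constraints that weights are positive and the $\theta_i$ are increasing — is defined by a system of strict linear inequalities, each of the form $\langle a, x \rangle > 0$ or $\langle a, x \rangle < 0$ (or $\ge 0$; one should be slightly careful since $d_T(u,v) \le \theta_i$ is non-strict, but one can perturb or argue directly) with $a \in \{-1, 0, 1\}^N$. Since $(w, \theta)$ is a feasible point, this region is nonempty; being the interior of a polyhedral cone (or the relative interior of a face), it is full-dimensional in its affine span and contains a rational point, hence a point lying in the interior of a simplicial cone spanned by $N$ of the constraint vectors. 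By a standard argument (Cramer's rule applied to the vertices of a bounding simplex, or simply: a nonempty cell of an arrangement of rational hyperplanes with $0/\pm 1$ coefficients contains an integer point whose coordinates are bounded by the maximal absolute value of an $N \times N$ subdeterminant of the coefficient matrix), the cell contains an integer point $(w', \theta')$. Hadamard's inequality bounds any such determinant of a $0/\pm 1$ matrix of size $N$: each column has norm at most $\sqrt{N}$, so $|\det| \le N^{N/2}$. Taking $N \le |E_T|$ after absorbing the $q$ thresholds (or just tracking $N = |E_T| + q \le$ a slightly larger bound — the statement as written uses $|E_T|$, so one presumably reduces to the tree being large enough that $|E_T| + q$ is dominated, or incorporates thresholds differently) gives the claimed $|E_T|^{|E_T|/2}$ bound. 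One then checks $w'$ is still positive (the positivity constraints were part of the system) and $\theta'$ still increasing, and that the sign pattern is preserved so $(T, w')$ is a leaf root of $G$ with thresholds $\theta'$.

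The main obstacle I anticipate is the handling of the non-strict inequalities ($d_T(u,v) \le \theta_i$ gives an edge): the feasible cell is then neither open nor closed, so I cannot naively say "it is full-dimensional and contains an interior integer point." The clean fix is to first show one may assume general position — perturb $(w,\theta)$ slightly so that no leaf-to-leaf distance exactly equals any threshold (this cannot create or destroy edges if the perturbation is small enough, since edges come from strict-or-equal comparisons and a strict inequality stays strict; one only needs to check that an equality $d_T(u,v) = \theta_i$ can be pushed to $d_T(u,v) < \theta_i$ to keep the edge), after which all constraints are strict and the cell is open. A secondary technical point is the precise bookkeeping of $N$ versus $|E_T|$ and whether the $q$ thresholds push the exponent past $|E_T|/2$; I expect the intended reading is that the tree of Claim \ref{cl:numbervertices} has enough edges, or that the thresholds can be folded into the analysis (e.g. fixing $\theta_q$ by scaling and noting the remaining freedom), so that the stated bound holds. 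I would state the determinant/Hadamard step carefully since that is where the exponential-but-controlled bound $2^{O(|E_T|\log|E_T|)}$ comes from, and it is the quantitatively important part feeding into Theorem \ref{thm:main}'s NP membership and the leaf-rank bound.
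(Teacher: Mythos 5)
Your core technique is the same as the paper's: perturb to break ties, reduce to a linear system with $\{-1,0,1\}$ coefficients, take a basic solution determined by $N$ independent constraints, and bound it via Cramer's rule and Hadamard's inequality. The one genuine divergence is the point you yourself flag as a ``secondary technical point,'' and it is in fact the crux: by keeping the $q$ thresholds as variables you get $N = |E_T| + q$, hence a bound of $(|E_T|+q)^{(|E_T|+q)/2}$ rather than the stated $|E_T|^{|E_T|/2}$. Neither of your proposed fixes works as stated: $q$ can be as large as the number of distinct leaf-to-leaf distances, i.e.\ up to $\binom{n}{2}$, which dominates $|E_T| \leq 2n-2$, so the tree is \emph{not} ``large enough,'' and it is not clear what ``folding the thresholds in'' means concretely. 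The paper's resolution is to remove the thresholds from the system entirely: after perturbing $w$ so that all subsets of $E_T$ have distinct weights, it imposes the constraints $\sum_{i\in B} x_i - \sum_{i\in A} x_i \geq 1$ for every pair $A \leq_w B$ of distinct subsets of $E_T$ (exponentially many constraints, but still only $|E_T|$ variables). Any feasible point preserves the relative order of \emph{all} leaf-to-leaf path lengths, so suitable integer thresholds $\theta'$ can simply be chosen afterwards. This is what makes the exponent $|E_T|/2$ rather than $(|E_T|+q)/2$. With your setup the weaker bound still suffices for NP membership (Lemma~\ref{lem:NP}), but the lemma as stated, and the resulting leaf-rank bound, would need to be weakened.

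A smaller point: your claim that a nonempty open cell of a $0/\pm1$ arrangement ``contains an integer point whose coordinates are bounded by the maximal $N\times N$ subdeterminant'' is not a theorem you can cite off the shelf in that form; you need the intermediate step of replacing each strict inequality $\langle a,x\rangle > 0$ by $\langle a,x\rangle \geq 1$ (legitimate after scaling, since the cell is an open cone), taking a basic solution $x_i = \det(A_i)/\det(A)$, and then clearing denominators by multiplying through by $|\det(A)|$. The paper makes explicit why the rescaled integer point remains feasible: the right-hand sides lie in $\{0,1\}$, so multiplying a feasible point by the factor $|\det(A)| \geq 1$ preserves every constraint. Your write-up should include this step rather than gesture at a ``standard argument,'' since it is exactly where integrality and the Hadamard bound $|\det(A_i)| \leq N^{N/2}$ enter.
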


\begin{proof}
We introduce a small perturbation to each edge weight in $ E_T $ (note that the initial weights $w$ and thresholds $\theta$ are not required to be integers). This allows us to assume, without loss of generality, that for any distinct subsets $ A, B \subseteq E_T $, their weights are different; that is, $ w(A) \neq w(B) $ whenever $ A \neq B $. Consequently, the weight function $ w $ induces a total order on the power set $ \mathcal{P}(E_T) $, defined by:
$$
A \leq_w B \quad \text{if and only if} \quad w(A) \leq w(B).
$$

We consider the polytope defined in $ \mathbb{R}^{|E_T|} $ with variables $ x_1, x_2, \dots, x_{|E_T|} $, subject to the following constraints:

\begin{itemize}
    \item For all subsets $ A, B \subseteq E_T $ such that $ A \leq_w B $ and $A \neq B$, we have:
   $
   \sum_{i \in B} x_i - \sum_{i \in A} x_i \geq 1.
   $
    \item$
   x_i \geq 0.
   $
\end{itemize}

The polytope is non-empty because it contains a scaled version of the weight vector $ w $. Specifically, since $ w(A) \neq w(B) $ for $ A \neq B $, the differences $ w(B) - w(A) $ are bounded away from zero. By choosing a sufficiently large scaling factor $ c > 0 $, the vector $ x_i = c \cdot w(e_i) $ satisfies all the ordering inequalities:
$$
\sum_{i \in B} x_i - \sum_{i \in A} x_i = c \cdot \left( w(B) - w(A) \right) \geq 1.
$$

Using a well-known bound on the smallest integer solution of a system $ Ax \geq b $ (see, for example, section 1 in \cite{Lenstra}), we already obtain a solution where each coordinate is bounded by $ (|E_T| +1) \cdot |E_T|^{|E_T|/2} $. However, we can derive an improved bound and provide a much simpler proof in this case, taking advantage of the fact that $ b $ is non-negative.

Within this polytope, there exists a point that is the unique solution to a system of $ |E_T|$ independent linear constraints. By selecting $|E_T|$ linearly independent inequalities from the set of constraints (including both ordering inequalities and non-negativity constraints), we can form a system:
$$
A x = b,
$$
where $ A $ is an $ |E_T| \times |E_T| $ matrix with entries in $ \{-1, 0, 1\} $, and $ b $ is a vector with entries in $ \{0, 1\} $.

Using Cramer's rule, the unique solution to this system is given by:
$$
x_i = \frac{\det(A_i)}{\det(A)},
$$
where $ A_i $ is the matrix obtained by replacing the $ i $-th column of $ A $ with the vector $ b $. Since $ A $ and $ b $ have integer entries, both $ \det(A) $ and $ \det(A_i) $ are integers.

Because $ \det(A) $ is an integer, we can rescale the solution vector without leaving the polytope. Multiplying both the numerator and denominator by $ |\det(A)| $, we obtain the integer vector $ (|\det(A_i)|)_i $. This rescaled vector satisfies all the constraints of the polytope because $b$ is non-negative. Applying Hadamard's inequality provides an upper bound for the absolute values of the determinants:
$$
|\det(A_i)| \leq \prod_{k=1}^{|E_T|} \|A_i^{(k)}\|_2 \leq |E_T|^{|E_T|/2},
$$
where $ A_i^{(k)} $ is the $ k $-th row of $ A_i $, and $ \|A_i^{(k)}\|_2 $ is its Euclidean norm. Defining the edge weight function $ w' : e \mapsto |\det(A_e)| $ induces by construction the same ordering on $ P(E_T) $ as the original weight function $ w $. In particular, this holds for all paths between any two leaves of $ T $. Consequently, there exists a new sequence of threshold $ \theta' $ such that $ (T, w') $ is a generalized leaf root of $ G $ with respect to $ \theta' $. Furthermore, by construction, $ w' $ is bounded above by $ |E_T|^{|E_T|/2} $.
\end{proof}

\begin{lemma}\label{lem:NP}
    The $GLP(q)$ recognition problem is in NP for all $q$.
\end{lemma}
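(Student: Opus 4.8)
The plan is to exhibit a polynomial-size certificate together with a polynomial-time verification procedure. By Claim~\ref{cl:numbervertices}, if $G \in GLP(q)$ on $n$ vertices, then $G$ has a generalized leaf root of order $q$ whose underlying tree $T$ has at most $2n-1$ vertices, hence at most $2n-2$ edges; in particular $|E_T| \leq 2n-2$. The certificate will consist of: (i) the tree $T$ (its adjacency structure), together with a bijection between the leaves of $T$ and $V(G)$, and (ii) the integer weight function $w'$ and integer threshold sequence $\theta' = (\theta'_1, \dots, \theta'_q)$ supplied by Lemma~\ref{lem:weight-bound}. Since $|E_T| \leq 2n-2$, Lemma~\ref{lem:weight-bound} guarantees each weight is a positive integer bounded by $|E_T|^{|E_T|/2} \leq (2n-2)^{n-1} \leq (2n)^n$, so each weight is encoded in $O(n \log n)$ bits; the thresholds, being of the same order as path-sums, are bounded by $|E_T| \cdot (2n)^n = n^{O(1)} \cdot (2n)^n$, again $O(n \log n)$ bits. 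Thus the whole certificate has size polynomial in $n$.

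Next I would describe the verifier. Given $G$ and a claimed certificate $(T, w', \theta')$, it first checks the syntactic conditions: $T$ is a tree, the stated leaf-to-vertex map is a bijection between the leaves of $T$ and $V(G)$, the weights $w'$ are positive integers, and $\theta'_1 < \dots < \theta'_q$. It then computes, for every pair of leaves $u, v$, the tree distance $d_T(u,v) = w'(P_{uv})$ where $P_{uv}$ is the unique $u$–$v$ path in $T$; there are $\binom{n}{2}$ such pairs and each path has at most $2n-2$ edges, so all distances are computed in polynomial time (all arithmetic is on $O(n\log n)$-bit integers). For each pair it counts the number of indices $i$ with $d_T(u,v) \leq \theta'_i$ and checks that this count is odd if and only if $(u,v) \in E(G)$. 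The verifier accepts iff every check passes. This procedure clearly runs in time polynomial in $n$, and it accepts some certificate iff $G \in GLP(q)$: if $G \in GLP(q)$, the root produced by combining Claim~\ref{cl:numbervertices} with Lemma~\ref{lem:weight-bound} is accepted, and conversely any accepted certificate is by definition a generalized leaf root of order $q$ witnessing $G \in GLP(q)$.

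The one point that requires a little care — the main (mild) obstacle — is justifying that a root attaining \emph{both} the vertex bound of Claim~\ref{cl:numbervertices} and the weight bound of Lemma~\ref{lem:weight-bound} exists simultaneously: one should first apply Claim~\ref{cl:numbervertices} to obtain a root on a tree $T$ with $|E_T| \leq 2n-2$, and \emph{then} apply Lemma~\ref{lem:weight-bound} to that fixed tree $T$, since the latter lemma only reweights the edges of a given tree and does not change its combinatorial structure. Everything else is routine bookkeeping. Finally, I would add the remark alluded to in the excerpt: even if one insists on an \emph{unweighted} tree certificate, one can keep the weighted tree as the certificate and simply note that replacing each edge of weight $c$ by a path of $c$ edges, while producing a tree of possibly exponential size, does not affect membership; the weighted encoding remains polynomial and the verification above is unaffected, so NP membership is robust to this modeling choice.
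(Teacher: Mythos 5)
Your proposal is correct and follows essentially the same route as the paper: combine Claim~\ref{cl:numbervertices} (a root with at most $2n-1$ vertices) with Lemma~\ref{lem:weight-bound} (integer weights bounded by $|E_T|^{|E_T|/2}$) to obtain a polynomially-sized certificate, then verify all leaf-to-leaf distances against the thresholds in polynomial time. Your explicit remark that Claim~\ref{cl:numbervertices} must be applied first and Lemma~\ref{lem:weight-bound} then applied to that fixed tree is a point the paper leaves implicit, but it is the same argument.
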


\begin{proof}
By combining Claim~\ref{cl:numbervertices} and Lemma~\ref{lem:weight-bound}, if $G \in GLP(q)$, then there exists a generalized leaf root $(T, w)$ with at most $2n - 1$ vertices, where the integral weights are bounded above by $(2n)^n$. Additionally, the thresholds in $\theta$ are all less than $n \cdot (2n)^n$ because the path between two leaves uses at most $n-1$ edges in the contracted tree of Claim~\ref{cl:numbervertices}, since there are at most $n-2$ internal vertices in $T$ because no internal vertex has degree 2. Both the tree and the thresholds can be described using $\text{poly}(n)$ bits, and verifying that it is indeed a leaf root of $G$ can also be done in polynomial time.
\end{proof}

In particular for the class of leaf powers (which is GLP(1)), we get the following theorem.

\lrank*

Combined with the well-known fact that if a graph is a $k$-leaf power, it is also a $k'$-leaf power for all $k' \geq 2k$ \cite{WB09}, we obtain the following corollary.

\begin{cor}
    A graph $G$ with $n$ vertices is a leaf power if and only if it is a $(2n)^{n+1}$-leaf power.
\end{cor}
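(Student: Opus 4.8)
The plan is to obtain the corollary as an immediate consequence of the leaf rank bound established just above (every leaf power on $n$ vertices has leaf rank less than $n\,(2n)^n$) together with the doubling property of leaf powers from~\cite{WB09} (any $k$-leaf power is a $k'$-leaf power for every $k' \geq 2k$). One direction needs no argument: by definition, a $(2n)^{n+1}$-leaf power is in particular a leaf power.

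For the converse, assume $G$ is a leaf power on $n$ vertices. The leaf rank bound gives an integer threshold $k < n\,(2n)^n$ such that $G$ is a $k$-leaf power. By the doubling property, $G$ is then a $k'$-leaf power for every $k' \geq 2k$, so it suffices to verify $(2n)^{n+1} \geq 2k$. Using the identity $(2n)^{n+1} = 2n\cdot(2n)^n$ and the bound $k < n\,(2n)^n$, we get $2k < 2n\,(2n)^n = (2n)^{n+1}$, hence $(2n)^{n+1} > 2k$ and a fortiori $(2n)^{n+1} \geq 2k$. Taking $k' = (2n)^{n+1}$ shows $G$ is a $(2n)^{n+1}$-leaf power, which completes the proof.

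There is no genuinely hard step here; the statement is essentially a bookkeeping corollary. The two points that deserve a moment of care are the arithmetic identity $2n\,(2n)^n = (2n)^{n+1}$, which is exactly what makes the doubled threshold fit under $(2n)^{n+1}$, and the fact that the result of~\cite{WB09} is phrased uniformly for all $k' \geq 2k$, so that we may freely pad the threshold all the way up to $(2n)^{n+1}$ rather than only doubling it once. No small-case analysis is required, since both the leaf rank bound and the doubling property hold for every $n$.
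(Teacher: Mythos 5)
Your proposal is correct and matches the paper's approach exactly: the paper derives this corollary by combining the leaf rank bound $k < n\,(2n)^n$ with the cited fact that a $k$-leaf power is a $k'$-leaf power for all $k' \geq 2k$, which is precisely your argument (with the arithmetic $2k < 2n\,(2n)^n = (2n)^{n+1}$ spelled out). No issues.
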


\medskip 

\noindent 
\emph{Remark.}  Note that the NP membership result of Lemma~\ref{lem:NP} assumes that the generalized leaf root has weighted edges.  If we require such leaf roots to be unweighted, they would have exponential size and could not be used as a certificate.  This would not change the result though: we could still use the weighted tree as a certificate for the existence of the unweighted tree.

\subsection[Reducing GLP(q) Recognition to GLP(q+1) Recognition]{Reducing $GLP(q)$ Recognition to $GLP(q+1)$ Recognition}

The rest of this section is dedicated to the proof of the following key lemma.

\begin{lemma}\label{lemma:hierarchy}
    If recognizing leaf powers is NP-hard, then, recognizing $GLP(q)$ is NP-hard for all $q$.
\end{lemma}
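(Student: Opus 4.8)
The plan is to proceed by induction on $q$. The base case $q=1$ is immediate, since $GLP(1)$ is exactly the class of leaf powers, so its recognition is NP-hard by hypothesis. For the inductive step it suffices to give a polynomial-time reduction from $GLP(q)$-recognition to $GLP(q+1)$-recognition; composing these reductions propagates NP-hardness all the way up the hierarchy.

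The reduction hinges on a simple parity computation. Fix thresholds $\theta_1 < \dots < \theta_{q+1}$; they partition $[0,\infty)$ into the $q+2$ intervals $[0,\theta_1], (\theta_1,\theta_2], \dots, (\theta_q,\theta_{q+1}], (\theta_{q+1},\infty)$, and whether two leaves at distance $d$ are adjacent depends only on which interval contains $d$, since $|\{i : d \le \theta_i\}|$ is constant on each interval. The fact we exploit is that if a set $W$ of leaves has all its pairwise distances \emph{strictly above} $\theta_1$ (i.e.\ avoiding the first interval), then the adjacency relation the order-$(q+1)$ root induces on $W$ coincides with the one induced by the order-$q$ root obtained by deleting $\theta_1$ and keeping $\theta_2 < \dots < \theta_{q+1}$: on $W$ the two ``number of thresholds above $d$'' counts differ by $0$, hence agree in parity. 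Symmetrically, if all pairwise distances of $W$ lie below $\theta_{q+1}$, the induced graph is the \emph{complement} of an order-$q$ leaf power, the two counts now differing by exactly $1$; since complementation is polynomial, the reduction may freely operate on whichever of $G$, $\overline G$ is convenient.

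Accordingly, given an instance $G$ of $GLP(q)$-recognition the reduction outputs $G'$ obtained by attaching to $G$ a ``scale gadget'' $H$, interfaced to $V(G)$ in a prescribed way and of size polynomial in $|V(G)|$. For the forward direction, from an order-$q$ root $(T,w)$ of $G$ with thresholds $\theta_2 < \dots < \theta_{q+1}$ one introduces a new \emph{smallest} threshold $\theta_1$ below every leaf-to-leaf distance of $T$ (which changes no adjacency inside $G$), grafts a realization of $H$ onto $T$, and uses the length of the grafting edges to place $H$'s leaves and $G$'s leaves in the intended relation, yielding an order-$(q+1)$ root of $G'$. For the backward direction, $H$ is designed so that in \emph{every} order-$(q+1)$ root of $G'$ the $G$-leaves are forced to lie pairwise strictly above the smallest threshold — intuitively, $H$ ``consumes'' the interval $[0,\theta_1]$ and pins the threshold configuration so that $G$ cannot reach into it. Granting this, restricting such a root to $V(G)$ and discarding $\theta_1$ yields, by the parity observation, an order-$q$ root of $G$. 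Hence $G' \in GLP(q+1) \iff G \in GLP(q)$, and the reduction runs in polynomial time.

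The main obstacle is the design and, above all, the correctness analysis of the scale gadget $H$: one must build $H$ and its interface so robustly that no layout of an order-$(q+1)$ root of $G'$ can escape the conclusion that $\theta_1$ is quarantined below the entire $G$-part, and do so despite the scaling freedom inherent in tree metrics. This is where the structural work lies — understanding how fixed induced subgraphs such as large cliques and cherries constrain distances in a generalized leaf root — and it is also where the earlier bounds (Claim~\ref{cl:numbervertices} and Lemma~\ref{lem:weight-bound}) help, by reducing the analysis to finitely many combinatorial ``shapes'' of the gadget's embedding. The forward construction, the parity computation, and the complementation bookkeeping are by contrast routine.
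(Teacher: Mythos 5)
Your proposal gets the skeleton right---induction on $q$, a polynomial reduction from $GLP(q)$ to $GLP(q+1)$, and the parity bookkeeping for inserting or deleting an extreme threshold (deleting the largest threshold complements the graph; deleting the smallest one, when all relevant distances exceed it, changes nothing). But the entire substance of the reduction is deferred to an unspecified ``scale gadget'' $H$, and you yourself flag its design and correctness as the main obstacle. As written, there is no argument that any such $H$ exists, no description of its interface with $V(G)$, and no proof of the backward direction (that every order-$(q+1)$ root of $G'$ is forced into the configuration where the smallest threshold is quarantined below the $G$-part). That is not a routine detail; it is the theorem. So the proposal has a genuine gap.

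For comparison, the paper's construction needs no bespoke gadget at all: it takes $G' = (G_1 \cup G_2)^C$, the complement of two disjoint copies of $G$. The forward direction is your complementation observation (add a new largest threshold above all distances), applied after separating the two copies of a $GLP(q)$ root by a long edge. The forcing in the backward direction comes not from a carefully engineered interface but from a structural obstruction: if some edge of $G_1$ and some edge of $G_2$ both had tree distance exceeding the largest threshold $\theta_{q+1}$, these four vertices would induce a $4$-cycle in $G'$ realized as a $\theta_{q+1}$-leaf power, contradicting chordality of leaf powers; hence one copy, say $G_1$, has all its edges at distance at most $\theta_{q+1}$, and discarding $\theta_{q+1}$ on $V(G_1)$ yields an order-$q$ root of $G$. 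If you want to complete your proof, this is the missing idea to supply: the ``gadget'' can simply be a second copy of $G$ together with a global complementation, with the $C_4$ obstruction doing the forcing. Your plan of pinning the \emph{smallest} threshold instead would require genuinely new machinery that you have not provided.
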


We prove this lemma by induction on $q\geq 1$. The base case is clearly true as the set of leaf powers is exactly the class $GLP(1)$. For the induction step we use the following reduction. Assume we want to know if a given graph $G$ is a generalized leaf power of order $q$. We consider the graph $G'=(G_1\cup G_2)^C$ where $G_1$ and $G_2$ are disjoint copies of $G$ (we use the $C$ superscript on a graph to denote its complement). We will show that $G\in GLP(q)$ if and only if $G'\in GLP(q+1)$. 

\begin{proposition}\label{prop:GLP-Closure}
    For any any integer $q$, if $G$ is a generalized leaf power of order $q$, then both $G$ and $G^C$ are generalized leaf powers of order $q+1$.
\end{proposition}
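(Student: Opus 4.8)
The plan is to take a generalized leaf root $(T,w)$ of $G$ of order $q$ with thresholds $\theta_1 < \dots < \theta_q$ and modify it to obtain leaf roots of order $q+1$ for $G$ and for $G^C$ respectively, by adding one new threshold that is either so large it captures every pair of leaves, or so small it captures no pair. First I would observe the parity bookkeeping: if two leaves $u,v$ satisfy $d_T(u,v) \le \theta_i$ for an odd number of indices $i \in \{1,\dots,q\}$ then they are adjacent in $G$; adding a threshold $\theta_{q+1}$ flips the count by one precisely on the pairs with $d_T(u,v) \le \theta_{q+1}$.

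For $G^C$: choose $\theta_{q+1}$ strictly larger than the diameter of $T$ (e.g. $\theta_{q+1} = 1 + \sum_{e \in E_T} w(e)$). Then every pair of leaves satisfies $d_T(u,v) \le \theta_{q+1}$, so the count of satisfied thresholds changes parity for every pair. Hence $(u,v)$ is an edge under the order-$(q+1)$ rule with thresholds $(\theta_1,\dots,\theta_q,\theta_{q+1})$ if and only if it was a non-edge under the order-$q$ rule, i.e. $(T,w)$ with this augmented threshold sequence is a generalized leaf root of order $q+1$ of $G^C$. One should double-check that the thresholds remain strictly increasing, which holds by the choice of $\theta_{q+1}$, and that adding a threshold exceeding all pairwise distances is legitimate — the definition only requires an increasing sequence, with no upper bound relative to $T$.

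For $G$ itself: we need to add a threshold that affects no pair's parity, i.e. one smaller than the minimum pairwise leaf distance. Since $T$ has at least two leaves and all weights are positive, $\delta := \min_{u \ne v} d_T(u,v) > 0$; but if we only insert a new smallest threshold $\theta_0 < \delta$, the sequence becomes $(\theta_0, \theta_1, \dots, \theta_q)$ and no leaf pair satisfies $d_T(u,v)\le\theta_0$, so parities are unchanged and $(T,w)$ with this sequence is an order-$(q+1)$ leaf root of $G$. The only subtlety is that thresholds should be integers and increasing; by Lemma~\ref{lem:weight-bound} we may first assume $w$ and the $\theta_i$ are integers with $\theta_1 \ge 1$, and then rescale all weights and thresholds by a factor of $2$ so that $\theta_1 \ge 2$, leaving room to insert $\theta_0 = 1$. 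Alternatively, for the $G^C$ direction one can similarly rescale to keep everything integral.

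I expect the main (minor) obstacle to be the integrality/monotonicity housekeeping: the reduction idea — pad with one trivial threshold — is immediate, but one must be careful that after inserting the new threshold the sequence is still strictly increasing and integer-valued, which is why invoking Lemma~\ref{lem:weight-bound} and a global rescaling is the cleanest route. Everything else is a one-line parity argument, so the proposition follows once these two constructions (one for $G$, one for $G^C$) are recorded.
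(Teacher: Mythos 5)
Your proposal is correct and matches the paper's proof essentially verbatim: append a threshold $0$ (or one below the minimum leaf distance) to preserve $G$, and a threshold above all leaf distances to complement into $G^C$. The only nit is that your explicit choice $\theta_{q+1}=1+\sum_e w(e)$ need not exceed $\theta_q$ (nothing forbids $\theta_q$ from being huge), so one should take $\theta_{q+1}=1+\max\{\theta_q,\,\mathrm{diam}\}$ as the paper does; the integrality housekeeping you worry about is harmless either way.
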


\begin{proof}
    Assume that $G\in GLP(q)$. Then, there exists a tree $T$ with edge weights $w$ which is its generalized leaf root of order $q$, and there exists an increasing sequence $\theta = (\theta_1, \dots, \theta_q)$ such that $(u,v)\in E(G)$ if and only if $dist(u,v)\leq \theta_i$ for an odd number of thresholds. 

    \begin{enumerate}
        \item First, we will show that if $G\in GLP(q)$ then $G\in GLP(q+1)$:

        Set $(\theta'_{i+1})=\theta_i$ if $i\leq q$ and $\theta'_1=0$. Then, clearly $\theta'$ is non-decreasing since $\theta'_1=0\leq \theta_i=\theta'_{i+1}$. Moreover, since $w$ is positive, distances are positive and therefore $d_T(u,v)\leq \theta_i$ for an odd number of thresholds if and only if $d_T(u,v)\leq \theta'_i$ for an odd number of thresholds. This implies that $T$ is a generalized leaf root of order $(q+1)$ for $G$, so $G\in GLP(q+1)$.
        
        \item Next, we will show that if $G\in GLP(q)$ then $G^C\in GLP(q+1)$:
        
        Let $M=\max_{u,v} d_T(u,v)$. Set $(\theta''_i)=\theta_i$ if $i\leq q$ and $\theta''_{q+1}=1+\max\set{\theta_q,M}$. Then, clearly $\theta''$ is increasing since $\theta''_{q+1}>\theta_q=\theta''_q$. Moreover $d_T(u,v)\leq \theta_i$ for an even number of thresholds if and only if $d_T(u,v)\leq \theta''_i$ for an odd number of thresholds, which is satisfied exactly for edges of $G^C$. This implies that $T$ is a generalized leaf root of order $(q+1)$ for $G^C$ so $G^C\in GLP(q+1)$.
    \end{enumerate}
\end{proof}

\begin{cor}\label{cor:reduction}
    If $G$ is a generalized leaf power of order $q$, then $G'$ is a generalized leaf power of order $q+1$.
\end{cor}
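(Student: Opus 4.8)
The plan is to reduce the claim to Proposition~\ref{prop:GLP-Closure} via a disjoint-union argument. Recall that $G' = (G_1 \cup G_2)^C$, where $G_1, G_2$ are disjoint copies of $G$. So it suffices to show first that $G_1 \cup G_2 \in GLP(q)$, and then invoke the second part of Proposition~\ref{prop:GLP-Closure} to conclude that $(G_1 \cup G_2)^C = G' \in GLP(q+1)$.

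To see that $G_1 \cup G_2 \in GLP(q)$, I would take a generalized leaf root $(T,w)$ of $G$ of order $q$ with thresholds $\theta = (\theta_1,\dots,\theta_q)$. After subdividing one edge if necessary (replacing an edge of weight $c$ by a path of two edges with weights summing to $c$, which preserves all leaf-to-leaf distances), we may assume $T$ has an internal vertex $c$. Form two vertex-disjoint copies $T^{(1)}, T^{(2)}$ of $(T,w)$ with $c_1, c_2$ the copies of $c$, and build $T''$ by adding an edge $\{c_1,c_2\}$ of weight $L$ for some $L > \theta_q$. The leaves of $T''$ are exactly the copies of $V(G)$ in $T^{(1)}$ and $T^{(2)}$ (the vertices $c_1,c_2$ only had their degree increased, so they remain internal), hence are indexed by $V(G_1 \cup G_2)$. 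Distances between two leaves in the same copy are unchanged, so such a pair is within the same number of thresholds as in $G$, matching adjacency in $G_i$; a leaf of $T^{(1)}$ and a leaf of $T^{(2)}$ are at distance at least $L > \theta_q \geq \theta_j$ for every $j$, so they lie within zero (an even number) of the thresholds and are therefore non-adjacent, exactly as required in the disjoint union. Thus $T''$ is a generalized leaf root of order $q$ of $G_1 \cup G_2$, so $G_1 \cup G_2 \in GLP(q)$, and Proposition~\ref{prop:GLP-Closure} then yields $G' = (G_1\cup G_2)^C \in GLP(q+1)$.

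I do not expect a substantial obstacle here; the only care required is bookkeeping. One must attach the bridging edge to internal vertices rather than leaves so that the leaf set (and hence the vertex set of $G_1\cup G_2$) is preserved, which is the reason for the preliminary subdivision in the degenerate case where $T$ is a single edge, and one must take $L$ strictly larger than the top threshold so that every cross-copy pair simultaneously falls outside all thresholds. As an alternative that avoids citing Proposition~\ref{prop:GLP-Closure}, one could argue directly: take two copies of a leaf root of $G$ joined by a long bridge, keep the thresholds $\theta_1 < \dots < \theta_q$, and append a new top threshold equal to the maximum cross-copy leaf distance; this single added threshold simultaneously flips the parity on same-copy pairs (realizing the complement within each copy) and supplies the one threshold needed to make every cross-copy pair an edge.
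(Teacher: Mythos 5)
Your proposal is correct and takes essentially the same route as the paper: the paper also shows $G_1 \cup G_2 \in GLP(q)$ by joining two copies of a generalized leaf root of $G$ with an edge heavier than the largest threshold, and then applies Proposition~\ref{prop:GLP-Closure} to conclude $G' = (G_1 \cup G_2)^C \in GLP(q+1)$. Your extra care about attaching the bridge at internal vertices (subdividing in the degenerate single-edge case) is a detail the paper glosses over but does not change the argument.
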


\begin{proof}
    It is not hard to see that if $G\in GLP(q)$, then $G_1\cup G_2$ the disjoint union of two copies of $G$ is also in $GLP(q)$ by taking two copies of the generalized 
    leaf root of order $q$ for $G$ separated by an edge whose weight is greater than maximum threshold of $\theta$. By Proposition~\ref{prop:GLP-Closure}, we know that if $G_1\cup G_2\in GLP(q)$, then $G'=(G_1\cup G_2)^C\in GLP(q+1)$, as desired. 
\end{proof}

So, we know that our reduction will work properly for $G\in GLP(q)$, it remains to prove that if $G\notin GLP(q)$ the reduction still works.

\begin{lm}\label{lem:reduction-conv}
    If $G\not\in GLP(q)$, then $G'\notin GLP(q+1)$.
\end{lm}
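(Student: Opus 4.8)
The plan is to prove the contrapositive: assuming $G' = (G_1 \cup G_2)^C \in GLP(q+1)$, we construct a generalized leaf root of $G$ of order $q$. Fix a generalized leaf root $(T', w')$ of $G'$ of order $q+1$ with increasing thresholds $\theta' = (\theta'_1, \dots, \theta'_{q+1})$, and let $A$ and $B$ denote the two sets of leaves of $T'$ corresponding to $V(G_1)$ and $V(G_2)$; we may assume $|V(G)| \ge 2$, since $G$ is trivially in $GLP(q)$ otherwise. Two elementary facts drive the argument. First, in $G'$ every vertex of $A$ is adjacent to every vertex of $B$, so for each cross pair $a \in A$, $b \in B$ the inequality $d_{T'}(a,b) \le \theta'_i$ holds for an odd, hence nonzero, number of indices $i$, forcing $d_{T'}(a,b) \le \theta'_{q+1}$. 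Second, a pair inside $A$ is an edge of $G'$ exactly when it is a non-edge of $G_1$, i.e.\ of $G$; so $G'[A]$ is a copy of $G^C$ (and likewise for $B$).

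The crux is the following diameter dichotomy: \emph{at least one of $A$, $B$ has diameter at most $\theta'_{q+1}$ in $T'$}. Suppose $d_{T'}(a_1,a_2) > \theta'_{q+1}$ for some $a_1, a_2 \in A$. For arbitrary $b_1, b_2 \in B$ I would invoke the four point condition on the leaves $a_1, a_2, b_1, b_2$ of $T'$ in the form
\[
d_{T'}(a_1,a_2) + d_{T'}(b_1,b_2) \;\le\; \max\bigl\{\, d_{T'}(a_1,b_1)+d_{T'}(a_2,b_2),\ d_{T'}(a_1,b_2)+d_{T'}(a_2,b_1) \,\bigr\} \;\le\; 2\theta'_{q+1},
\]
the last inequality by the cross-distance bound above. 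Hence $d_{T'}(b_1,b_2) < \theta'_{q+1}$, and since $b_1, b_2$ were arbitrary, $\diam_{T'}(B) < \theta'_{q+1}$. Thus whenever $\diam_{T'}(A) > \theta'_{q+1}$ we get $\diam_{T'}(B) \le \theta'_{q+1}$, establishing the dichotomy; relabelling the two copies if needed, assume $\diam_{T'}(A) \le \theta'_{q+1}$.

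To conclude, let $S$ be the minimal subtree of $T'$ spanning $A$, with degree-$2$ vertices suppressed and their incident weights summed. Then $S$ is a positively weighted tree whose leaf set is exactly $A$ (which we identify with $V(G)$) and with $d_S = d_{T'}$ on $A$. I claim $(S, w'|_S)$ together with the truncated thresholds $(\theta'_1, \dots, \theta'_q)$ is a generalized leaf root of $G$ of order $q$. Indeed, fix $a_1, a_2 \in A$ and put $d = d_S(a_1,a_2) = d_{T'}(a_1,a_2)$; since $d \le \theta'_{q+1}$, the number of indices $i \in \{1,\dots,q+1\}$ with $d \le \theta'_i$ equals $1$ plus the number of such $i$ in $\{1,\dots,q\}$, so the first count is odd iff the second is even. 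Combining this with the fact that $(a_1,a_2) \in E(G')$ iff $(a_1,a_2) \notin E(G)$ yields precisely: $(a_1,a_2) \in E(G)$ iff $d_S(a_1,a_2) \le \theta'_i$ for an odd number of $i \in \{1,\dots,q\}$. Hence $G \in GLP(q)$, contradicting the hypothesis, so $G' \notin GLP(q+1)$.

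The one step I expect to require real care is the diameter dichotomy. The triangle inequality alone only gives $\diam_{T'}(A) \le 2\theta'_{q+1}$ from the cross-distance bound, which is not enough; it is the four point condition — a property of tree metrics specifically — that lets us exchange a factor of $2$ in one copy for a tight bound on the other, and this is where the argument genuinely uses that $T'$ is a tree rather than an arbitrary metric.
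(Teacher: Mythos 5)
Your proof is correct, and its endgame (restricting $T'$ to the leaves of one copy and truncating the threshold list to $(\theta'_1,\dots,\theta'_q)$, using that a distance at most $\theta'_{q+1}$ is below an odd number of the $q+1$ thresholds iff it is below an even number of the first $q$) is exactly the paper's. Where you genuinely diverge is in ruling out the bad case where both copies contain a pair at distance exceeding $\theta'_{q+1}$. The paper notes that a far edge in each of $G_1$ and $G_2$ would give four vertices inducing a $4$-cycle in $G'$ whose restricted tree, with threshold $\theta'_{q+1}$, would be a leaf root of that $4$-cycle, contradicting the known chordality of leaf powers. You instead prove a diameter dichotomy directly from the four point condition: $d_{T'}(a_1,a_2)+d_{T'}(b_1,b_2)$ is at most the maximum of the two cross pairings, each bounded by $2\theta'_{q+1}$ via the cross-edge observation, so one far pair in $A$ forces every pair in $B$ to be strictly closer than $\theta'_{q+1}$. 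The two dichotomies are equivalent in content (non-edges within a copy are edges of $G'$ and hence automatically close, so ``every edge of $G_1$ is close'' coincides with $\diam_{T'}(A)\le\theta'_{q+1}$), and your form of the four point condition --- each of the three pairings is at most the maximum of the other two --- is a valid consequence of Theorem~\ref{thm:4PC}. What your route buys is self-containment: it inlines the metric reason the $4$-cycle obstruction works rather than citing chordality of leaf powers, at the cost of a slightly longer computation; the paper's version is shorter given the known fact.
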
 

\begin{proof}
    Assume that $G\notin GLP(q)$ and that $G'\in GLP(q+1)$. We remark that the non-edges of $G'$ are exactly the edges of $G_1$ and the edges of $G_2$. Moreover, those are exactly the pairs of vertices whose distances are bounded above by an even number of threshold, according to some tree $T$ and a sequence of thresholds $(\theta_1, \ldots, \theta_{q+1})$, where $\theta_{q+1}$ is the largest threshold.

    Assume first that every edge $(u, v)$ of $G_1$ satisfies $d_T(u, v) \leq \theta_{q+1}$.  Then for each such edge,  $d_T(u, v)$ is upper-bounded by an odd number of thresholds in $(\theta_1,\dots,\theta_q)$.
    Conversely, if $u$ and $v$ form a non-edge in $G_1$, their distance is upper-bounded by an odd number of thresholds in $(\theta_1, \ldots, \theta_{q+1})$ and thus by an even number of thresholds in $(\theta_1, \ldots, \theta_q)$.  It follows that by restricting $T$ to the set of leaves $V(G_1)$ and using the thresholds $\theta_1,\dots,\theta_q$, we get $G_1 \in GLP(q)$ and thus $G \in GLP(q)$, since $G_1$ is a copy of $G$.  

    We may therefore assume that $G_1$ has an edge $(u_1, v_1)$ such that $d_T(u_1, v_1) > \theta_{q+1}$.  Because $G_2$ is also a copy of $G$, we may assume for the same reasons that $G_2$ has an edge $(u_2, v_2)$ with $d_T(u_2, v_2) > \theta_{q+1}$.  Note, these are non-edges of $G'$, and since every edge between $G_1$ and $G_2$ is present in $G'$, the quadruple $(u_1, v_1, u_2, v_2)$ induces a 4-cycle in $G'$.  
    This also implies that $d_T(x, y) \leq \theta_{q+1}$ for each $x \in \{u_1, v_1\}$ and each $y \in \{u_2, v_2\}$.  
    This implies in turn that by restricting $T$ to these four vertices and using $\theta_{q+1}$ as a threshold, we obtain a leaf-root for this 4-cycle.  This is a contradiction, since the $4$-cycle is not a leaf power because leaf powers are chordal~\cite{Ray92}.
    
    
    

    So, if $G\notin GLP(q)$ then $G'\notin GLP(q+1)$, as desired.
\end{proof}

Combining \Cref{cor:reduction} and \Cref{lem:reduction-conv} complete the proof of \Cref{lemma:hierarchy}. Interestingly \Cref{lem:reduction-conv} has another consequence.
The smallest known example of a graph that is not an $i$-interval PCG, as given in \cite{GeneralizePCG}, has $\binom{4i+1}{2i} + 4i + 1 $ vertices. Based on Lemma~\ref{lem:reduction-conv}, we can construct by induction a much smaller example of size $2^{2i+1}$.

The smallest graph that is not a PCG has eight vertices~\cite{durocher2015graphs,7verticesPCG}.  Our result provides, as a special case, an alternate proof on the existence of such a small non-PCG.

\begin{cor}
    $\forall q\in \N$, there exists a graph of $2^{q+1}$ vertices which is not in $GLP(q)$.
\end{cor}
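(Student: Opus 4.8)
The plan is to prove this by induction on $q$, mirroring the hierarchy argument already established: the base case is $q=1$, where we exhibit a graph on $2^2 = 4$ vertices that is not a leaf power, and the induction step uses the reduction $G \mapsto G' = (G_1 \cup G_2)^C$ together with \Cref{lem:reduction-conv}.

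For the base case $q=1$, take $G$ to be the $4$-cycle $C_4$. It has $4 = 2^{1+1}$ vertices, and as noted in the proof of \Cref{lem:reduction-conv}, $C_4$ is not a leaf power because leaf powers are chordal~\cite{Ray92}, whereas $C_4$ is an induced cycle of length $4$. So $C_4 \notin GLP(1)$, establishing the claim for $q=1$.

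For the induction step, suppose $G$ is a graph on $2^{q+1}$ vertices with $G \notin GLP(q)$. Form $G' = (G_1 \cup G_2)^C$ where $G_1, G_2$ are disjoint copies of $G$; then $G'$ has $2 \cdot 2^{q+1} = 2^{q+2}$ vertices, and by \Cref{lem:reduction-conv} we have $G' \notin GLP(q+1)$. This produces a graph on $2^{(q+1)+1}$ vertices not in $GLP(q+1)$, completing the induction. Since each step exactly doubles the vertex count starting from $C_4$, after $q-1$ steps we obtain a graph on $2^{q+1}$ vertices not in $GLP(q)$.

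There is essentially no obstacle here: every ingredient — the chordality of leaf powers and the reduction lemma \Cref{lem:reduction-conv} — is already available, and the only thing to check is the arithmetic of the vertex count, which matches since $C_4$ has $4 = 2^2$ vertices and doubling $q-1$ times gives $2^{q+1}$. One minor point worth a sentence in the writeup is to confirm that $G'$ is well-defined and that complementation and disjoint union behave as claimed, but this is immediate from the definitions used earlier in the section.
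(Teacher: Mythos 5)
Your proof is correct and follows exactly the paper's argument: induction on $q$ with the $4$-cycle as the base case (non-chordal, hence not a leaf power) and Lemma~\ref{lem:reduction-conv} applied to $G' = (G_1 \cup G_2)^C$ for the doubling step. No gaps.
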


\begin{proof}
    We will prove this by induction on $q$.  As a base case for $q = 1$, the graph $4$-cycle is not in $GLP(1)$ since leaf powers are known to be chordal.
    For the induction step, assume that we have a graph $G$ of $2^{q+1}$ vertices with $G\notin GLP(q)$. Then, by Lemma~\ref{lem:reduction-conv}, $G' = (G_1\cup G_2)^C\notin GLP(q+1)$ where $G_1$ and $G_2$ are copies of $G$. In particular, $G'$ has $2^{q+2}$ vertices, as desired.
    %
    %
\end{proof}

\section{Recognizing Leaf Powers is NP-Hard.}

Now that we have Lemma~\ref{lemma:hierarchy}, to conclude the proof of Theorem~\ref{thm:main}, it remains to prove that recognizing leaf powers is NP-hard. In this section, we establish this result, beginning with key properties of tree metrics.

\subsection{Tree Metric Properties}\label{sec:Tree Metric}

In this section, we introduce the properties of tree metrics that will be used in the reduction. Throughout this section, let $(T,w)$ be a weighted tree on the vertex set $V$ with positive weights, and let $d_T$ denote the distance on $V$ induced by $(T,w)$. The following theorem is the classical \emph{four point condition} \cite{BUNEMAN197448}, which can be derived by examining all possible configurations of paths connecting four vertices on a tree. The four point condition fully characterizes the set of possible tree metrics, which will be crucial in our reduction.

\begin{theorem}\cite{BUNEMAN197448} \label{thm:4PC}
For all $ x,y,z,t\in V$ one of the following must be true:
    \begin{enumerate}
        \item $d_T(x,y)+d_T(t,z)=d_T(x,z)+d_T(t,y)>d_T(y,z)+d_T(t,x)$
        \item $d_T(x,y)+d_T(t,z)=d_T(y,z)+d_T(t,x)>d_T(x,z)+d_T(t,y)$
        \item $d_T(x,z)+d_T(t,y)=d_T(y,z)+d_T(t,x)>d_T(x,y)+d_T(t,z)$ 
        \item $d_T(x,y)+d_T(t,z)=d_T(x,z)+d_T(t,y)=d_T(y,z)+d_T(t,x)$.
    \end{enumerate}
\end{theorem}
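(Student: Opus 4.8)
The plan is to prove the four point condition by first repackaging the four alternatives into a single clean inequality, and then reading that inequality off the shape of the minimal subtree spanned by the four points. Write $S_1 = d_T(x,y) + d_T(z,t)$, $S_2 = d_T(x,z) + d_T(y,t)$, and $S_3 = d_T(x,t) + d_T(y,z)$. With this notation, alternative~1 is exactly "$S_1 = S_2 > S_3$", alternative~2 is "$S_1 = S_3 > S_2$", alternative~3 is "$S_2 = S_3 > S_1$", and alternative~4 is "$S_1 = S_2 = S_3$". So it suffices to prove the single statement that \emph{among $S_1, S_2, S_3$ the maximum value is attained by at least two of them} (this automatically records that the remaining one is no larger).

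Next I would bring in two standard facts about trees. First, for any three vertices $u,v,w$ of $T$, the three paths $P_{uv}, P_{vw}, P_{wu}$ share a unique common vertex $m(u,v,w)$ (the median), and $d_T(u,v) = d_T(u,m) + d_T(m,v)$ for each of the three pairs. Apply this to $x,y,z$ and set $m = m(x,y,z)$; the union $P_{mx} \cup P_{my} \cup P_{mz}$ is a tripod with center $m$. Second, letting $p$ be the vertex of this tripod closest to $t$ (the projection of $t$ onto the tripod), uniqueness of paths in a tree gives $d_T(t,v) = d_T(t,p) + d_T(p,v)$ for every vertex $v$ on the tripod, in particular for $v \in \{x,y,z,m\}$. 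If $m$ or $p$ happens not to be an actual vertex of $T$ (possible with weighted edges), subdivide the relevant edge first; this changes no distances.

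Then I would finish by a case analysis on which of the three legs of the tripod contains $p$. Put $a = d_T(m,x)$, $b = d_T(m,y)$, $c = d_T(m,z)$, $f = d_T(t,p) \geq 0$, and $e = d_T(m,p) \geq 0$. In each case all six pairwise distances among $x,y,z,t$ become explicit linear expressions in these quantities; for instance, if $p$ lies on the leg $P_{mz}$ then $d_T(t,x) = f+e+a$, $d_T(t,y) = f+e+b$, $d_T(t,z) = f+(c-e)$, while $d_T(x,y) = a+b$, $d_T(x,z) = a+c$, $d_T(y,z) = b+c$, and a one-line computation gives $S_2 = S_3 = a+b+c+e+f \geq a+b+c-e+f = S_1$, with equality throughout exactly when $e = 0$, i.e.\ when $p = m$. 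The legs $P_{mx}$ and $P_{my}$ give, by the same computation, $S_1 = S_2 \geq S_3$ and $S_1 = S_3 \geq S_2$ respectively, and the degenerate configuration $p = m$ gives $S_1 = S_2 = S_3$. This exhausts all possibilities and yields precisely the four alternatives.

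I do not expect a real obstacle; the only care needed is in stating the two tree facts cleanly and making sure the case split is genuinely exhaustive, including the degenerate situations where $m$ coincides with one of $x,y,z$ or where $p$ coincides with $m$ or with a leg endpoint. An alternative and slightly more pedestrian route — the "examining all configurations of paths" proof hinted at above — is to classify the topology of the minimal Steiner tree on the four points (a single vertex of degree $4$, or two vertices of degree $3$ joined by a positive-length path, in one of three ways) and read the three sums off each topology directly; it gives the same four cases, but the median argument organizes the bookkeeping more economically.
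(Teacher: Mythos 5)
Your proof is correct and complete. Note that the paper itself supplies no proof of this statement: it is quoted as a known result of Buneman, with only the remark that it ``can be derived by examining all possible configurations of paths connecting four vertices on a tree'' --- which is essentially your ``more pedestrian'' alternative of classifying the Steiner topology of the four points. Your main argument is the cleaner median-based one: reduce the four alternatives to ``the maximum of $S_1,S_2,S_3$ is attained at least twice,'' take the median $m$ of $x,y,z$, project $t$ onto the resulting tripod at $p$, and compute. I checked the arithmetic in all three leg cases; e.g.\ with $p$ on $P_{mz}$ one gets $S_2=S_3=a+b+c+e+f\geq a+b+c-e+f=S_1$, matching alternative~3 (and alternative~4 when $e=0$), and the other two legs are symmetric. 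Two minor remarks: your worry that $m$ or $p$ might fail to be a vertex is unfounded --- the median of three vertices and the gate of $t$ into the subtree spanned by $x,y,z$ are always genuine vertices of $T$, regardless of edge weights, since they are defined by the combinatorial intersection of paths --- so the subdivision step can be dropped; and the degenerate situations you flag (e.g.\ $m$ coinciding with one of $x,y,z$, or repeated points among $x,y,z,t$) are indeed covered by your formulas by allowing $a,b,c,e,f$ to be zero, so no separate treatment is needed. What your approach buys over the topology-classification route is exactly what you say: one uniform computation parametrized by the location of $p$, instead of a case analysis over quartet topologies.
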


From this theorem we will derive two useful lemmas. 

Informally, the first lemma states that if there exists two vertices $x$ and $y$ and two pairs of vertices such that $x$ is closer to one pair and $y$ is closer to the other pair, then both vertices of either pair will be closer to the same vertex of the other pair.

\begin{lemma}\label{lem:4-PC-split}
For all $a_1, a_2, b_1, b_2 \in V$, if there exist $x, y \in V$ such that:
$$
\max(d_T(a_1, x), d_T(a_2, x)) < \min(d_T(b_1, x), d_T(b_2, x))
$$
and
$$
\max(d_T(b_1, y), d_T(b_2, y)) < \min(d_T(a_1, y), d_T(a_2, y)),
$$
then
$$
d_T(a_1, b_1) + d_T(a_2, b_2) = d_T(a_1, b_2) + d_T(a_2, b_1).
$$
In particular,
$$
d_T(a_1, b_1) \leq d_T(a_1, b_2) \iff d_T(a_2, b_1) \leq d_T(a_2, b_2).
$$

\end{lemma}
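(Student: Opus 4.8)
The plan is to apply the four point condition (Theorem~\ref{thm:4PC}) to the quadruple $x, y, a_i, b_j$ in a suitable order, and argue that the hypotheses rule out all but the configuration that gives the desired equality. First I would handle the core case with a single $a$ and a single $b$: suppose $d_T(a,x) < d_T(b,x)$ and $d_T(b,y) < d_T(a,y)$, and apply the four point condition to the four points $\{x, y, a, b\}$. The three pairwise sums to compare are $d_T(x,y)+d_T(a,b)$, $d_T(x,a)+d_T(y,b)$, and $d_T(x,b)+d_T(y,a)$. From $d_T(a,x) < d_T(b,x)$ we get $d_T(x,a)+d_T(y,b) < d_T(x,b)+d_T(y,b) \leq d_T(x,b)+d_T(y,a)$ is not quite immediate, so instead I would add the two strict inequalities $d_T(a,x) < d_T(b,x)$ and $d_T(b,y) < d_T(a,y)$ to obtain $d_T(a,x)+d_T(b,y) < d_T(b,x)+d_T(a,y)$, i.e. the sum $d_T(x,a)+d_T(y,b)$ is strictly smaller than the sum $d_T(x,b)+d_T(y,a)$. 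By the four point condition, the two largest of the three sums are equal; since $d_T(x,a)+d_T(y,b)$ is strictly less than $d_T(x,b)+d_T(y,a)$, it cannot be one of the two equal maxima, hence $d_T(x,y)+d_T(a,b) = d_T(x,b)+d_T(y,a)$.

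Now I would feed this into the actual statement. Applying the observation just proved to the pair $(a_1, b_1)$ gives $d_T(x,y)+d_T(a_1,b_1) = d_T(x,b_1)+d_T(y,a_1)$, and to the pair $(a_1,b_2)$ gives $d_T(x,y)+d_T(a_1,b_2) = d_T(x,b_2)+d_T(y,a_1)$; subtracting, $d_T(a_1,b_1) - d_T(a_1,b_2) = d_T(x,b_1) - d_T(x,b_2)$. Doing the same with $a_2$ in place of $a_1$ yields $d_T(a_2,b_1) - d_T(a_2,b_2) = d_T(x,b_1) - d_T(x,b_2)$, the same right-hand side. Hence $d_T(a_1,b_1) - d_T(a_1,b_2) = d_T(a_2,b_1) - d_T(a_2,b_2)$, which rearranges to $d_T(a_1,b_1) + d_T(a_2,b_2) = d_T(a_1,b_2) + d_T(a_2,b_1)$. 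The ``in particular'' clause is then immediate: $d_T(a_1,b_1) \le d_T(a_1,b_2)$ iff $d_T(a_1,b_1) - d_T(a_1,b_2) \le 0$ iff $d_T(a_2,b_1) - d_T(a_2,b_2) \le 0$ iff $d_T(a_2,b_1) \le d_T(a_2,b_2)$, since the two differences are equal.

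I would also double-check that the hypotheses of the core observation are genuinely available for each of the four applications: for the pair $(a_i, b_j)$ we need $d_T(a_i,x) < d_T(b_j,x)$ and $d_T(b_j,y) < d_T(a_i,y)$, and both follow directly from the two displayed $\max$/$\min$ inequalities in the statement (the first gives $d_T(a_i,x) \le \max(d_T(a_1,x),d_T(a_2,x)) < \min(d_T(b_1,x),d_T(b_2,x)) \le d_T(b_j,x)$, and symmetrically for $y$). The main obstacle — really the only subtle point — is the logical step inside the four point condition: recognizing that a sum which is \emph{strictly} smaller than another of the three sums cannot belong to the two-way tie of maxima, which holds in all four cases of Theorem~\ref{thm:4PC} (in case 4 all three are equal, contradicting strictness, so that case is actually excluded). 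Everything else is bookkeeping with the triangle-equality identities.
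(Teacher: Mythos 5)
Your proof is correct and follows essentially the same route as the paper's: apply the four point condition to $\{x,y,a,b\}$, use the strict inequality $d_T(a,x)+d_T(b,y)<d_T(b,x)+d_T(a,y)$ to force the equality $d_T(x,y)+d_T(a,b)=d_T(x,b)+d_T(y,a)$, and then subtract the resulting identities for the four $(a_i,b_j)$ pairs. The only difference is cosmetic bookkeeping in which difference is isolated before combining.
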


\begin{proof}
We apply Theorem~\ref{thm:4PC} with $a, x, b, y$ (where $a$ represents either $a_1$ or $a_2$ and $b$ represents either $b_1$ or $b_2$ by symmetry). The condition on $x$ and $y$ implies that $d_T(b, x) + d_T(a, y) > d_T(a, x) + d_T(b, y)$,
so the only possible case in Theorem~\ref{thm:4PC} is:
$$
d_T(a, b) + d_T(x, y) = d_T(a, y) + d_T(b, x) > d_T(a, x) + d_T(b, y). \label{4-PC-guarantee}
$$
In particular, by taking $a = a_1$ and $b = b_1$ for the first equation, and $a = a_1$ and $b = b_2$ for the second, we obtain:
$$
d_T(x, y) + d_T(a_1, b_1) = d_T(b_1, x) + d_T(a_1, y),
$$
$$
d_T(x, y) + d_T(a_1, b_2) = d_T(b_2, x) + d_T(a_1, y).
$$
By isolating $d_T(a_1, y) - d_T(x, y)$ in both equations, we get:
$$
d_T(a_1, b_1) - d_T(b_1, x) = d_T(a_1, b_2) - d_T(b_2, x).
$$
Applying the same argument with $a = a_2$ instead of $a = a_1$, we obtain:
$$
d_T(a_2, b_1) - d_T(b_1, x) = d_T(a_2, b_2) - d_T(b_2, x).
$$
By combining both, we have:
$$
d_T(a_1, b_1) + d_T(a_2, b_2) = d_T(a_1, b_2) + d_T(a_2, b_1).
$$

\end{proof}
\begin{figure}
    \centering
    \begin{tikzpicture}
        \node (a1) at (-3,1) {$a_1$};
        \node (a2) at (-3,-1) {$a_2$};
        \node (b1) at (3,1) {$b_1$};
        \node (b2) at (3,-1) {$b_2$};
        
        \node (O1) at (-2,0) {$O_1$};
        \node (Ma) at (-0.75,0) {$M_a$};
        
        \node (Mb) at (0.75,0) {$M_b$};
        \node (O2) at (2,0) {$O_2$};

        \draw (a1) -- (O1) -- (a2);
        \draw (O1) -- (Ma) -- (Mb) -- (O2);
        \draw (b1) -- (O2) -- (b2);
    \end{tikzpicture}
    \caption{If we satisfy the conditions of Lemma~\ref{lem:4-PC-split}, we must be in this configuration. $x$ must connect to the left of $M_a$, where vertices are closest to $a_1$ and $a_2$ and $y$ connects to the right of $M_b$, where vertices are closest to $b_1$ and $b_2$.}
    
\end{figure}
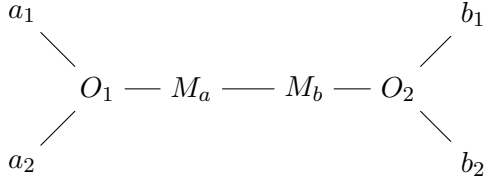

\begin{lemma}\label{lem:twins}
    Let $a_1,a_2,b,c \in V$ such that $d_T(a_1,b)< d_T(a_2,b)$ and $d_T(a_1,c)< d_T(a_2,c)$. If there exists a vertex $x$ such that $d_T(a_2,x)<d_T(a_1,x)<d_T(b,x)<d_T(c,x)$, then $d_T(a_2,b) < d_T(a_2,c)$.
\end{lemma}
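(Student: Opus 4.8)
The plan is to project everything onto the path $P$ joining $a_1$ and $a_2$ in $T$ and read off the relevant distances directly from this projection, without even invoking the four point condition. Write $L := d_T(a_1,a_2)$ for the length of $P$. For each $z \in \{b,c,x\}$ let $m_z$ be the unique point of $T$ where the path from $z$ meets $P$ (equivalently, the median of $a_1,a_2,z$), and set $t_z := d_T(a_1,m_z) \in [0,L]$ and $h_z := d_T(z,m_z) \ge 0$. Since $m_z$ lies on both the path from $a_1$ to $z$ and the path from $a_2$ to $z$,
$$
d_T(a_1,z) = t_z + h_z \qquad\text{and}\qquad d_T(a_2,z) = (L - t_z) + h_z .
$$
Feeding $z=b$ into the hypothesis $d_T(a_1,b)<d_T(a_2,b)$ gives $t_b < L/2$, and similarly $t_c<L/2$; feeding $z=x$ into $d_T(a_2,x)<d_T(a_1,x)$ gives $t_x>L/2$. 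Hence $t_b<t_x$ and $t_c<t_x$, i.e.\ along $P$ both $m_b$ and $m_c$ lie strictly between $a_1$ and $m_x$.

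The key step, and the one I expect to require the most care, is to show that for $z\in\{b,c\}$,
$$
d_T(z,x) = h_z + (t_x - t_z) + h_x .
$$
The idea: since $t_z<t_x$, the point $m_z$ lies on $P$ strictly before $m_x$, so the path from $a_1$ to $x$ passes through $m_z$; therefore the path from $a_1$ to $z$ and the path from $a_1$ to $x$ share exactly the segment from $a_1$ to $m_z$ and then diverge there (one leaving $P$ toward $z$, the other continuing along $P$ toward $m_x$). Thus $m_z$ is the median of $a_1,z,x$, so $d_T(z,x)=d_T(z,m_z)+d_T(m_z,x)=h_z+d_T(m_z,x)$, and $d_T(m_z,x)=d_T(m_z,m_x)+d_T(m_x,x)=(t_x-t_z)+h_x$ because $m_z$ lies between $a_1$ and $m_x$ on $P$. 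The routine part here is checking the degenerate configurations (e.g.\ $b$, $c$, or $x$ lying on $P$, or $m_z=a_1$), which cause no trouble, together with the elementary fact that two branches hanging off a path at distinct points are vertex-disjoint except possibly at those attachment points.

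Once this formula is established the conclusion is immediate. From it we get
$$
d_T(b,x) < d_T(c,x) \iff h_b - t_b < h_c - t_c ,
$$
while the earlier distance formula for $a_2$ gives
$$
d_T(a_2,b) < d_T(a_2,c) \iff h_b - t_b < h_c - t_c .
$$
Since $d_T(b,x)<d_T(c,x)$ holds by hypothesis, we conclude $d_T(a_2,b)<d_T(a_2,c)$, as desired. (As an alternative not relying on the projection picture, one can apply Theorem~\ref{thm:4PC} to the four points $a_2,x,b,c$: in two of the four quartet topologies the inequality $d_T(b,x)<d_T(c,x)$ forces $d_T(a_2,b)<d_T(a_2,c)$ directly, and the remaining ``bad'' topology, in which $\{a_2,c\}$ is separated from $\{x,b\}$, is ruled out by a short case analysis on which edge of the induced Steiner tree the branch of $a_1$ attaches to, each case contradicting one of $d_T(a_1,b)<d_T(a_2,b)$, $d_T(a_1,c)<d_T(a_2,c)$, $d_T(a_2,x)<d_T(a_1,x)$.)
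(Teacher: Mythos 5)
Your proof is correct, but it takes a genuinely different route from the paper's. The paper stays entirely within the formal four point condition: it applies Theorem~\ref{thm:4PC} to the quadruple $a_1,a_2,b,x$, observes that the hypotheses $d_T(a_2,x)<d_T(a_1,x)$ and $d_T(a_1,b)<d_T(a_2,b)$ force the case $d_T(a_1,x)+d_T(a_2,b)=d_T(b,x)+d_T(a_1,a_2)$, does the same with $c$ in place of $b$, and subtracts the two equalities to get $d_T(a_2,b)-d_T(a_2,c)=d_T(b,x)-d_T(c,x)<0$. You instead work geometrically: you project $b$, $c$, $x$ onto the path from $a_1$ to $a_2$ via their medians, translate the hypotheses into $t_b,t_c<L/2<t_x$, and derive the concatenation formula $d_T(z,x)=h_z+(t_x-t_z)+h_x$, after which both $d_T(b,x)<d_T(c,x)$ and the desired conclusion reduce to the same inequality $h_b-t_b<h_c-t_c$. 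The two arguments use exactly the same subset of the hypotheses (neither needs $d_T(a_1,x)<d_T(b,x)$), and your Gromov-product-style identity is essentially a restatement of the four point equality the paper extracts, so the content is equivalent; what the paper's version buys is brevity and no need to justify the path-decomposition formula (your flagged ``key step,'' which is standard but does require the gate/median facts you sketch, including that $t_z<t_x$ strictly so the two branches attach at distinct points), while your version buys a transparent picture of where each hypothesis is used. Your parenthetical alternative via Theorem~\ref{thm:4PC} on $a_2,x,b,c$ is closer in spirit to the paper but still not the quadruples the paper actually uses.
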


Informally, the above lemma states that if $a_1$ is closer to two vertices of a pair than $a_2$, then any vertex closest to $a_2$ than to the other vertices will be closer to the same vertex of the pair as $a_2$.

\begin{proof}
  We apply Theorem~\ref{thm:4PC} with $a_1, a_2, b, x$. The conditions of the lemma imply that
$$
d_T(a_1, x) + d_T(a_2, b) > d_T(a_2, x) + d_T(a_1, b),
$$
so the only possible case in Theorem~\ref{thm:4PC} is:
$$
d_T(a_1, x) + d_T(a_2, b) = d_T(b, x) + d_T(a_1, a_2) > d_T(a_2, x) + d_T(a_1, b).
$$
Using the same argument with $c$, we get:
$$
d_T(a_1, x) + d_T(a_2, c) = d_T(c, x) + d_T(a_1, a_2) > d_T(a_2, x) + d_T(a_1, c).
$$
In particular we have the two following equations:
$$
d_T(a_1, x) + d_T(a_2, b) = d_T(b, x) + d_T(a_1, a_2),
$$
$$
d_T(a_1, x) + d_T(a_2, c) = d_T(c, x) + d_T(a_1, a_2).
$$
Subtracting the second equation from the first yields:
$$
d_T(a_2, b) - d_T(a_2, c) = d_T(b, x) - d_T(c, x).
$$
Since $d_T(b, x) < d_T(c, x)$ by assumption, the right side is negative, and therefore
$$
d_T(a_2, b) < d_T(a_2, c).
$$

\end{proof}
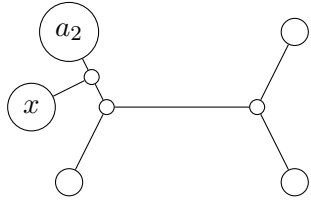
\begin{figure}
    \centering
    \begin{tikzpicture}
        \node[draw,circle ] (a2) at (-1.5,1) {$a_2$};
        \node[draw,circle ] (a) at (-1.5,-1) {};
        \node[draw,circle ] (b) at (1.5,1) {};
        \node[draw,circle ] (c) at (1.5,-1) {};
        
        \node[draw, circle, inner sep = 2pt] (O3) at (-1.2,0.4) {};
        
        \node[draw,circle] (x) at (-2,0) {$x$};
        
        \node[draw, circle, inner sep = 2pt] (O1) at (-1,0) {};
        \node[draw, circle, inner sep = 2pt] (O2) at (1,0) {};

        \draw (a2) -- (O3) -- (x);
        \draw (a) -- (O1) -- (O2);
        \draw (b) -- (O2) -- (c);
        \draw (O3) -- (O1);
    \end{tikzpicture}
    \caption{By the conditions of Lemma~\ref{lem:twins} the other 3 vertices are all closer to $a_1$ than to $a_2$, so the only place $x$ can be is on the branch connecting $a_2$ to the tree formed by the other 3 leaves.}
\end{figure}

\subsection{The Triangle Ordinal Clustering Problem}

In this section, we will describe the \emph{triangle ordinal clustering} problem which we wish to reduce it to the leaf powers recognition problem. 

The triangle ordinal clustering problem, introduced and shown to be NP-hard in \cite{SF06}, is defined as follows:

A \emph{triangle order} on a set $S$ is a strict partial order $\prec_S$ on the set of unordered pairs of elements from $S$, induced by strict total orderings within each triple of elements.  Specifically, for every triple $i,j,k\in S$, there is a strict total ordering among the three pairs $\{i,j\}, \{i,k\}, \{j,k\}$ for all $i, j, k \in S$. The triangle order $\prec_S$ is formed by combining all these total orderings across all such triples. To simplify the notation we will write $ij \prec_S ik$ instead of $\{i,j\} \prec_S \{i,k\}$.  Note that we do not assume that $i, j, k$ are distinct.  When $i = j \neq k$, we slightly abuse notation and consider $\{i, j\} = \{i, i\}$ as a pair, and assume that $i i \prec_S i k$ (otherwise we reject the instance).  Also notice that when $i = j = k$, there are no pairs to order as we consider a strict order. 

In the \emph{triangle ordinal clustering problem} (TOC), the input consists of a set $ S $ and a triangle order $ \prec_S $ on $ S $. The goal is to determine whether there exists a weighted tree $ (T, w) $ whose leaves correspond to the elements of $ S $, such that the induced distances $ d_T(i, j) $ between leaves $ i $ and $ j $ realize $ \prec_S $. This means that for every triple $ i, j, k \in S $, the ordering of the distances $ d_T(i, j), d_T(i, k), d_T(j, k) $ matches the specified ordering of the pairs $ \{i, j\}, \{i, k\}, \{j, k\} $ in $ \prec_S $.
We then say that $(T, w)$ \emph{realizes} $\prec_S$.
\subsection{Intuition of the Reduction}

In this section, we will provide the intuition behind our reduction from the triangle ordinal clustering problem to the leaf powers recognition problem.

In both the leaf powers recognition problem and the triangle ordinal clustering problem, we wish to verify the existence of a tree that satisfies various distance properties over a set of vertices. So, it is natural to consider a reduction in which the triangle order $\prec_S$ is realized by a tree $T$ which is the leaf root of some graph $G_S$.

However, for leaf powers, we can only keep the information of whether the vertices are close or far, so having a graph $G_S$ over $S$ is not enough to determine the order of the distances. So, we need to add vertices of the form $u_{i,j}$ such that $u_{i,j}$ is a witness of the set of vertices of $S$ that are closer to $i$ than $j$. If we can guarantee that $i$ and its corresponding set of vertices $U_i$ all have the same ordering over $S\setminus i$, we can use the property that each $u_{i,j}$ separates the vertices closer to $i$ than $i$ from those further away from $i$ than $j$ to get a total ordering over $S\setminus i$ for $i$ which is consistent with $\prec_S$. By having vertices close to $i$ and $U_i$ but not to the rest of the graph, we can ensure that those vertices are consistent with one another.

Consider $G'_S$ which has vertex set $S\cup \left(\bigcup_{(i,j)\in S^2} \{u_{i,j}\}\right)$. If we only needed to guarantee that $G'_S$ is a leaf power when $\prec_S$ is realizable, it is possible to construct a leaf root of $G'_S$ from the tree realizing $\prec_S$. However, due to various technicalities, the reverse direction is not so straightforward. That is, we do not have enough structure on $G'_S$ to guarantee that a leaf power on $G'_S$ corresponds to a tree realizing $\prec_S$. 

The four point condition, or to be more precise Lemma~\ref{lem:4-PC-split}, can help us get an ordering on sums of pairs of distances which can be used to guarantee most orderings but it is not enough to prove that $\prec_S$ is realized. 

First, in order to make sure that all vertices are on the same side of $i$ with respect to $U_i$, we add the vertex $O$ which is a witness that the vertices of $S$ are all ``close'' giving us extra structure. Next, instead of only considering $S$, we consider $S'$ in which each $i\in S$ has a copy $i^+$ and $i^-$ so that one of $i^+$ or $i^-$ must satisfy all the total orderings over $S'$. For our reduction, we will show that the $i^-$ copies of the vertices will respect the triangle order $\prec_S$ as needed.

This motivates the definition of $G_S$ in Section~\ref{sec:reduction}. Next, in Section~\ref{sec:TOC-to-LP}, we formally prove that if $\prec_S$ is realizable then $G_S$ is a leaf power. Finally, in Section~\ref{sec:LP-to-TOC} we formally prove that if $G_S$ is a leaf power then $\prec_S$ is realizable.

\subsection{The reduction}\label{sec:reduction}

In this section, we now proceed to describe our reduction formally. 

Given a set $S$ and a triangle order $\prec_S$ over the pairwise distances, we create a new set $S'$ by introducing two copies $i^+,i^-$ for every element $i$ of $S$. We extend the triangle order of $S$ to a new triangle order $\prec_{S'}$. For all $i\in S$, if $ij\prec_S ik$, then $\forall (i',j',k')\subseteq \set{i^+,i^-}\times \set{j^+,j^-}\times \set{k^+,k^-}$ we have $i'j'\prec_{S'} i'k'$\footnote{We do not assume that $i$ and $j$ are distinct, in particular we get that $i^+i^-\prec_{S'} i^+ k$ and $i^- i^+ \prec_{S'} i^- k$}. 
For all $x\in S'$, for all $i\in S$ with $x\notin \{i^+,i^-\}$, we have $xi^+\prec_{S'} xi^-$. Finally, $\forall i\in S$ we have $i^+i^+ \prec_{S'} i^+i^-$ and $i^-i^- \prec_{S'} i^-i^+$. We need to replace $S$ by $S'$ in order to add more structure to the vertices in order to apply the lemmas from Section~\ref{sec:Tree Metric}. The triangle order $\prec_{S'}$ is given

\begin{table}[ht!]
    \centering
    \begin{tabular}{c|c}
         Case & $\prec_{S'}$\\
         \hline
         $x\neq y$ & $xx\prec_{S'} xy$\\
         \hline
         $\set{x,y}=\set{i^+,i^-}$ and $z\notin \set{i^+,i^-}$ & $xy\prec_{S'} xz$\\
         \hline
         $ij\prec_S ik$, $(x,y,z)=(i^\pm,j^\pm,k^\pm)$ & $i^\pm j^\pm\prec_{S'} i^\pm k^\pm$\\
    \end{tabular}
    \caption{The triangle order $\prec_{S'}$ given $\prec_S$}
    \label{tab:S'}
\end{table}

We define the graph $G_S = (V_S, E_S)$, where the vertex set $V_S$ consists of the following:

\begin{itemize}
    \item For each $x \in S'$, a vertex $v_x \in V_S$.
    \item For each $x, y \in S'$, a vertex $u_{x,y} \in V_S$\footnote{We remark that $u_{x,y}$ and $u_{y,x}$ are distinct vertices and that $u_{x,x}$ is also a vertex}. Denote $U_x=\set{u_{x,y}: y\in S'}$. These vertices can be seen as witnesses to the set of vertices closer to $x$ than $x$ is to $y$.
    \item A special vertex $O \in V_S$. This vertex is a witness of the proximity of the vertices $v_x$ for $x\in S'$.
\end{itemize}

The edge set $E_S$ is defined as follows:

\begin{itemize}
    \item For each $x \in S'$, an edge $(O, v_x) \in E_S$. 
    \item For each $x, y \in S'$, an edge $(v_x, v_y) \in E_S$. 
    \item For each $x, y, z \in S'$, an edge $(u_{x,y}, u_{x,z}) \in E_S$. That is, $U_x$ is a clique. This will ensure that for fixed $x$, all vertices of $U_x$ are close and will be consistent with one another with respect to the vertices $v_y$ for $y\in S'$.
    \item For each $x, y, z \in S'$, if $xy \prec_{S'} xz$, an edge $(u_{x,z}, v_y) \in E_S$. Combined with the consistency obtained from the previous type of edges, this will ensure that $U_x$ has the order we desire with respect to $v_y$ for $y\in S'$.
\end{itemize}

\begin{figure}
    \centering
    \begin{tikzpicture}
        \node[draw,circle] (i) at (0,0) {$v_{i^\pm}$};
        
        \node[draw,circle] (O) at (0:3) {$O$};
        \node[draw,circle] (y) at (45:3) {$v_{y}$};
        \node[draw,circle] (x) at (90:3) {$v_{x}$};
        \node[draw,circle] (i') at (135:3) {$v_{i^\mp}$};
        
        \node[draw,circle] (i y) at (180:3) {$u_{i^\pm,y}$};
        \node[draw,circle] (i x) at (225:3) {$u_{i^\pm,x}$};
        \node[draw,circle] (i i') at (270:3) {$u_{i^\pm,i^\mp}$};
        
        \node[draw,circle] (i i) at (315:3) {$u_{i^\pm,i^\pm}$};

        \draw (O) -- (x) -- (y) -- (O);
        \draw (O) -- (i) -- (i') -- (O);
        \draw (x) -- (i) -- (y) --  (i') -- (x);

        \draw (x) -- (i y) -- (i') -- (i x);

        \draw (i x) -- (i y) -- (i i') -- (i i) -- (i x) -- (i) -- (i y) -- (i i);
        \draw (i) -- (i i') -- (i x);
    \end{tikzpicture}
    \caption{Subgraph generated by $i^\pm\in S'$ with $\set{x,y}\subseteq S'\setminus \set{i^{\pm}}$ such that $i^\pm x\prec_{S'} i^\pm y$.}
    \label{fig:Our Gadget}
\end{figure}
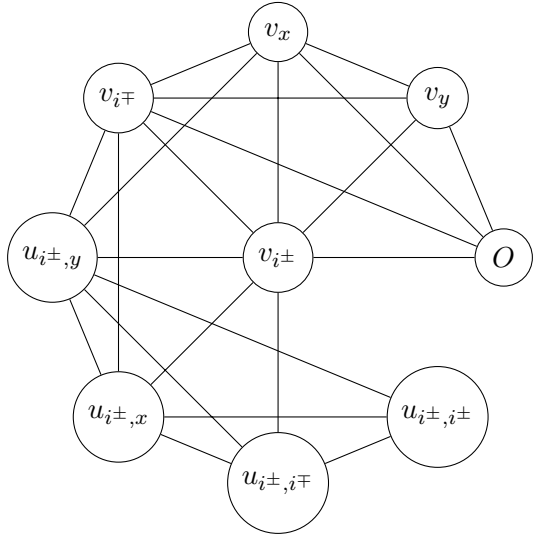

We summarize the information on the graph in Table~\ref{tab:G_S}.

\begin{table}[ht!]
    \centering
    \begin{tabular}{c|c|c}
         $u$ & $\Gamma(u)\cap V_S$ & $\Gamma(u)\cap U_x$\\
         \hline
         $v_y$ & $V_S$ & $\set{u_{x,z}:z\in S'\text{ and }xy\prec_{S'} xz}
         $\\
         \hline
         $u_{y,z}$ & $\set{v_t:t\in S'\text{ and }yt\prec_{S'}yz}$ & $\begin{cases}
             U_x\text{ if }x=y\\
             \emptyset\text{ otherwise}
         \end{cases}$\\
         \hline
         $O$ & $V_S$ & $\emptyset$
    \end{tabular}
    \caption{The neighborhoods of the vertices in $G_S$}
    \label{tab:G_S}
\end{table}

This concludes the construction.  We now argue that this produces an equivalent instance in the next sections.

\subsection{Realizable Triangle Orders Correspond to Leaf Powers}\label{sec:TOC-to-LP}

In this section, we will prove that any realizable triangle order gets mapped to a graph which is a leaf power.

\begin{lemma}\label{lem:TOC to LP}
    If $\prec_S$ is realizable by some weighted tree, then $G_S$ is a leaf power.
\end{lemma}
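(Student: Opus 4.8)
Given a weighted tree $(T,w)$ realizing $\prec_S$, I would first promote it to a tree $(T',w')$ realizing $\prec_{S'}$, and then, from $T'$, explicitly build a leaf root $(T'', w'')$ of $G_S$. For the first step, the natural move is to replace each leaf $i$ of $T$ by a tiny "cherry": attach two new leaves $i^+$ and $i^-$ to $i$ (or to a short pendant path ending at $i$'s position) with very small weights $\varepsilon^+ < \varepsilon^-$, small enough not to disturb any of the strict inequalities already present in $T$. One checks that this makes $i^+$ marginally closer to everything than $i^-$ (so $xi^+ \prec_{S'} xi^-$ holds), that $i^+ i^+ \prec_{S'} i^+ i^-$ type constraints and the "$xx \prec_{S'} xy$" diagonal constraints hold because weights are positive, and that $i^\pm j^\pm \prec_{S'} i^\pm k^\pm$ inherits from $ij \prec_S ik$ in $T$ once $\varepsilon$ is chosen smaller than every relevant gap. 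So $(T',w')$ realizes $\prec_{S'}$.

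**Building the leaf root of $G_S$.** Now I would assemble $T''$ from $T'$ by hanging gadgets off the structure. The vertices $v_x$ for $x\in S'$ should sit roughly where the leaves $x$ sit in $T'$; the vertex $O$ should attach near the "center" of $T'$ (on an edge or vertex through which many $v_x$–$v_y$ paths pass) via a pendant edge, with a large enough threshold so that every $(O,v_x)$ and every $(v_x,v_y)$ is an edge — this is where I use that $T'$ realizes all the triangle orders and that the $v_x$'s are mutually "close." For each $x\in S'$, I would realize $U_x$ as a star (or path) whose center is attached along the edge leading to $v_x$, placed so that $u_{x,y}$ is at a distance from $v_x$ that increases with the rank of $y$ in $x$'s personal ordering over $S'$; making $U_x$ a clique is easy (put all its leaves close together), and the key point is to tune the pendant length and the threshold $\theta$ so that $(u_{x,z},v_y)\in E$ exactly when $xy \prec_{S'} xz$. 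Concretely, $d_{T''}(u_{x,z},v_y) = (\text{offset of } u_{x,z} \text{ from } v_x) + d_{T'}(x,y)$ up to a constant, so one wants the offsets quantized finely enough that this sum crosses $\theta$ at the correct place — this is essentially the same "local star that orders leaves as $x$ wants" construction advertised in the overview.

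**Where the work is.** The routine parts are the $\varepsilon$-perturbation bookkeeping for $\prec_{S'}$ and checking the "easy" edge/non-edge classes ($O$–$v_x$, $v_x$–$v_y$, cliqueness of $U_x$, and $u_{x,y}$–$u_{x',y'}$ non-adjacency for $x\ne x'$, which follows by making distinct $U_x$'s far apart). The genuinely delicate step is calibrating the $U_x$ gadget so that the single global threshold $\theta$ simultaneously gives: (i) $u_{x,z}$ adjacent to $v_y$ iff $xy\prec_{S'}xz$, (ii) $u_{x,z}$ adjacent to $O$ — wait, it is not, per Table~\ref{tab:G_S}, so I must also ensure $d(u_{x,z},O) > \theta$ for all $x,z$, and (iii) $u_{x,z}$ non-adjacent to $v_y$ when $xz \prec_{S'} xy$ or $z = x$-type diagonal cases, all with one $\theta$. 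The cleanest way is probably a two-scale design: a "coarse" scale governing the backbone $T'$ and the $v_x$/$O$ distances, and a much finer scale $\delta \ll$ coarse gaps for the within-$U_x$ offsets, choosing $\theta$ to fall in a gap of the coarse scale so that the fine offsets resolve exactly the ordering $\prec_{S'}$ restricted to $x$. I expect verifying that such scales and $\theta$ coexist — i.e. that no two constraints pull $\theta$ in contradictory directions — to be the main obstacle, and it will come down to showing that the distances $d_{T'}(x,y)$ for a fixed $x$, shifted by the quantized $U_x$-offsets, interleave in exactly the order prescribed by $\prec_{S'}$, which is guaranteed because $T'$ realizes $\prec_{S'}$.
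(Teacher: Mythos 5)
Your overall architecture is the same as the paper's: hang $v_x$ near the position of $x$ in a tree realizing $\prec_{S'}$ (which the paper builds from $T'$ by scaling by $4$ and attaching $i^+,i^-$ at depths $1$ and $2$, the integer analogue of your $\varepsilon$-cherries), attach $O$ and each $U_x$ far away on long pendant edges, and realize each $U_x$ as a star whose leaves' distances to the $v_y$'s are governed by a single global threshold. However, you stop exactly at the step that carries the content of the lemma --- choosing the offsets of the $u_{x,z}$ --- and the two concrete hints you give for resolving it both point the wrong way. First, the offset of $u_{x,z}$ from $v_x$ must \emph{decrease}, not increase, with the rank of $z$ in $x$'s ordering: $u_{x,z}$ has to be adjacent to every $v_y$ with $xy\prec_{S'}xz$, so the farther $z$ is from $x$, the larger the neighborhood of $u_{x,z}$ must be, hence the closer $u_{x,z}$ must sit to $v_x$. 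Second, the ``two-scale'' design with within-$U_x$ offsets on a fine scale $\delta$ much smaller than the coarse gaps cannot work: writing $d(u_{x,z},v_y)=\mathrm{offset}(u_{x,z})+d(p_x,v_y)$, the cutoff $\theta-\mathrm{offset}(u_{x,z})$ must land in the gap immediately below $d(p_x,v_z)$, so as $z$ varies the offsets must sweep the full range of the backbone distances $d(p_x,v_\cdot)$; they are coarse-scale quantities, and with only fine perturbations all $u_{x,z}$ for fixed $x$ would acquire essentially the same neighborhood among the $v_y$'s.

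The missing idea is that the offset must \emph{cancel} the backbone distance: the paper attaches $u_{x,y}$ to a hub $O_x$ with $d(p_x,O_x)=5\diam(T')$ and $d(O_x,u_{x,y})=5\diam(T')-d(p_x,v_y)$, so that
$$d(u_{x,y},v_z)=10\diam(T')+d(p_x,v_z)-d(p_x,v_y),$$
and the single threshold $\theta=10\diam(T')-1$ then decides adjacency exactly by the sign of $d(p_x,v_z)-d(p_x,v_y)$, i.e.\ by $\prec_{S'}$ (using integrality of the distances). This one formula simultaneously discharges all three of your constraints (i)--(iii): it gives the correct $u_{x,z}$--$v_y$ adjacencies, keeps $d(O,u_{x,z})$ and $d(u_{x,z},u_{x',z'})$ for $x\neq x'$ above $\theta$ because every such path crosses two edges of weight $5\diam(T')$, and keeps $U_x$ a clique because the sum of any two offsets from $O_x$ is at most $10\diam(T')-2$. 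One still has to check that these pendant weights are positive, which is why the paper scales $T'$ so that $\diam(T')\geq 6$ and verifies $d(p_x,v_y)\leq 4\diam(T')+5<5\diam(T')$; your proposal does not address positivity. So: right skeleton, but the decisive calibration is absent and your proposed route to it would fail.
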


\begin{proof}
Assume that $\prec_S$ is realizable by some weighted tree $T'$ of diameter at least $6$\footnote{This is without loss of generality as if $T'$ has a smaller diameter, the tree obtained by multiplying all edge weights by 6 still realizes $\prec_S$}.  We argue that the existence of $T'$ implies that there exists a tree $T$ which is a leaf root of $G_S$. We can obtain $T$ from $T'$ by doing the following sequence of operations:

\begin{enumerate}
    \item Multiply the weight of each edge of $T'$ by 4. Then, relabel each leaf $i\in S$ to be $p'_i$. \label{tree:mult}
    \item Add an edge of weight $5\diam(T')$ from any non-leaf vertex of the tree to a vertex $O$.\label{tree:O}
    \item For each $i\in S$  add an edge of weight 1 from $p'_i$ to $p_{i^+}$ and an edge of weight 2 from $p'_i$ to $p_{i^-}$. \label{tree:duplicate}
    \item For each $x\in S'$, we add an edge of weight 1 from $p_x$ to $v_x$. Then, add an edge of weight $5\diam(T')$ from $p_x$ to a new vertex that we call $O_{x}$.\label{tree:S'}
    \item For each $x,y\in S'$, add an edge of weight $5\diam(T')-d_T(p_x,v_y)$ from $O_x$ to $u_{x,y}$. Namely, the subtree induced by $O_x$ and $U_{x}$ is a star.\label{tree:u}
\end{enumerate}

We claim that the tree $T$ obtained from $T'$ by these steps is a leaf root of $G_S$ with threshold $10\diam(T')-1$. To prove this, we need to check the distances which are then summarized in Table~\ref{tab:dist}) are correct. 

\begin{table}[ht!]
    \centering
    \begin{tabular}{c|c|c}
         $u$ & $v$ & $d_T(u,v)$\\
         \hline
         $p'_i$ & $p'_j$ & $4d_{T'}(i,j)$\\
         \hline
         $O$ & $\argmin_{w\in V(T')} \set{d_T(O,w)}$ & $5\diam(T')$\\
         \hline
         $p'_i$ & $p_{i^+}$ & $1$\\
         \hline
         $p'_i$ & $p_{i^-}$ & 2\\
         \hline
         $p_x$ & $O_x$ & $5\diam(T')$\\
         \hline
         $u_{x,y}$ & $O_x$ & $5\diam(T')-d_T(p_x,v_y)$
    \end{tabular}
    \caption{Distances in $T$}
    \label{tab:dist}
\end{table}

We illustrate in Figure~\ref{fig:Leaf Root} below how $T$ is constructed.

\begin{figure}[ht!]
    \centering
    
    \begin{tikzpicture}
        \node[draw,circle] (P'i) at (0,0) {$p'_{i}$};
        \node[draw,circle] (Px) at (0,-2) {$v_{x}$};
        \node[draw,fill,circle,inner sep=0pt,minimum size=0.1cm] (pO) at (0,-1) {};
        \node[draw,circle] (Pi+) at (1.5,1.5) {$p_{i^+}$};
        \node[draw,circle] (Pi-) at (-1.5,1.5) {$p_{i^-}$};

        \node[draw,circle] (i+) at (1.5,3.5) {$v_{i^+}$};
        \node[draw,circle] (i-) at (-1.5,3.5) {$v_{i^-}$};

        \node[draw,circle] (Oi+) at (4,1.5) {$O_{i^+}$};
        \node[draw,circle] (Oi-) at (-4,1.5) {$O_{i^-}$};

        \node[draw,circle] (i+x) at (4,3.5) {$u_{i^+,x}$};
        \node[draw,circle] (i-x) at (-4,3.5) {$u_{i^-,x}$};
        
        \node[draw,circle] (O) at (-2,-1) {O};
        
        \draw (P'i) to node [right] {$d$} (Px);

        \draw (i-) to node [right] {1} (Pi-);
        \draw (i+) to node [left] {1} (Pi+);

        \draw (Pi-) to node [above] {2} (P'i);
        \draw (Pi+) to node [above] {1} (P'i);

        \draw (Pi-) to node [below] {$5D$} (Oi-);
        \draw (Pi+) to node [below] {$5D$} (Oi+);

        \draw (Oi+) to node[right] {$5D-(d+1)$} (i+x);
        \draw (Oi-) to node[left] {$5D-(d+2)$} (i-x);

        \draw[dashed] (pO) to node[above] {$\geq 5D$} (O);
    \end{tikzpicture}
    \caption{Subgraph generated by $i\in S$ and $x\in S'$. We denote $\diam(T')=D$ to simplify the notation of the edge weights. Additionally, the distance between $v_x$ and $p'_i$ in $T$ to be $d$.}
    \label{fig:Leaf Root}
\end{figure}
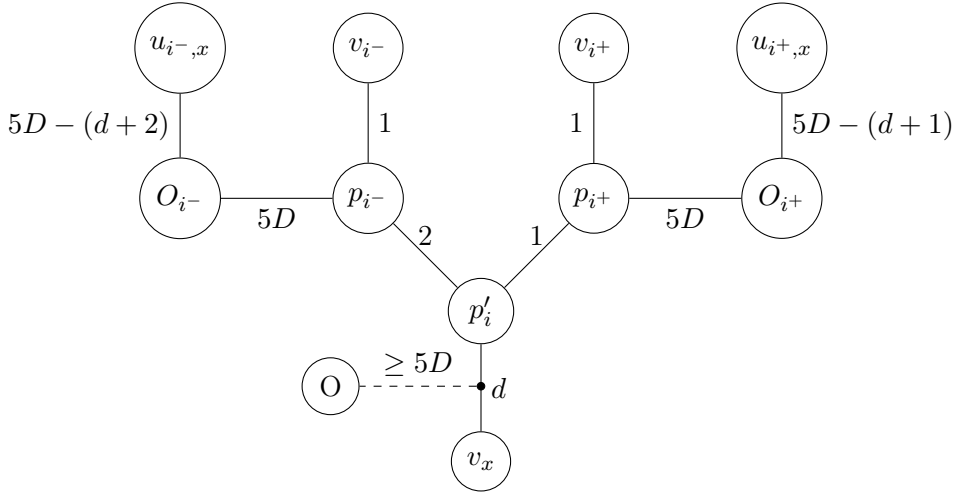

\begin{claim}\label{cl:p-dist-exact}
    $\forall x,y\in S'$ we have the following:
    \begin{enumerate}
        \item $d_T(p_x,v_y)=1$ if and only if $x=y$.
        \item $d_T(p_x,v_y)=4$ if and only if $\set{x,y}=\set{i^+,i^-}$ for some $i\in S$.
        \item If $x\in \set{i^+,i^-}$ and $y\in \set{j^+,j^-}$ for some $i\neq j$, then $d_T(p_x,v_y)>4$. Additionally:
        \begin{enumerate}
            \item If $x=i^+$ and $y=j^+$, then $d_T(p_x,v_y)=4d_{T'}(i,j)+3$.
            \item If $x=i^-$ and $y=j^+$, then $d_T(p_x,v_y)=4d_{T'}(i,j)+4$.
            \item If $x=i^+$ and $y=j^-$, then $d_T(p_x,v_y)=4d_{T'}(i,j)+4$.
            \item If $x=i^-$ and $y=j^-$, then $d_T(p_x,v_y)=4d_{T'}(i,j)+5$.
        \end{enumerate}
    \end{enumerate}
\end{claim}

\begin{proof}
    We will prove each case individually:
    \begin{enumerate}
        \item If $x=y$, then by construction of the the edge $(p_x,v_x)$ in step~\ref{tree:S'} we have $d_{T}(p_x,v_y) = 1$.  Moreover, if $x \neq y$, then one can check that no edge is added between $p_x$ and $v_y$, and the path between the two must traverse at least two edges, so that $d_T(p_x, v_y) > 1$. This can be seen in Figure~\ref{fig:Leaf Root}.
        \item If $\set{x,y}=\set{i^+,i^-}$ for some $i\in S$, then by the construction the edges $(v_{i^+},p_{i^+})$ and $(p_{i^-},v_{i^-})$ in step~\ref{tree:S'} and the construction of $(p_{i^+},p'_{i})$ in step~\ref{tree:duplicate}, these 3 edges all have weight 1. Moreover, by the construction of the edge $(p'_{i},p_{i^-})$ in step~\ref{tree:duplicate}, it has weight 2. So:
        \begin{itemize}
            \item If $x=i^+$ and $y=i^-$, then:
            \[d_{T}(p_x,v_y)=d_{T}(v_{i^+},p_{i^+})+d_{T}(p_{i^+},p'_{i})+d_{T}(p'_{i},p_{i^-})=1+1+2=4.\]
            \item If $x=i^-$ and $y=i^+$, then:
            \[d_{T}(p_x,v_y)=d_{T}(v_{i^-},p_{i^-})+d_{T}(p_{i^-},p'_{i})+d_{T}(p'_{i},p_{i^+})=1+2+1=4.\]
        \end{itemize}
        We get $d_{T}(p_x,v_y)=4$ in either case. Figure~\ref{fig:Leaf Root}.
        \item $\forall i,j\in S$ with $i \neq j$, by construction of the edges $(p'_i,p_{i^+})$, $(p'_i,p_{i^-})$, $(p'_j,p_{j^+})$ and $(p'_j,p_{j^-})$ in step~\ref{tree:duplicate} and the construction of $(p_{i^+},v_{i^+})$, $(p_{i^-},v_{i^-})$, $(p_{j^+},v_{j^+})$ and $(p_{j^-},v_{j^-})$ in step~\ref{tree:S'}, we have $2=d_{T}(v_{i^+},p'_{i})=d_{T}(v_{j^+},p'_{j})$ and $3=d_{T}(v_{i^-},p'_{i})=d_{T}(v_{j^-},p'_{j})$. Additionally, we get $d_{T}(p'_{i},p'_{j})=4 d_{T'}(i,j)$ by the construction of $T$ in step~\ref{tree:mult}. This gives the following:
        \begin{enumerate}
            \item $d_{T}(p_{i^+},v_{j^+})=d_{T}(p_{i^+},p'_{i})+d_{T}(p'_{i},p'_{j})+d_{T}(p'_{j},v_{j^+})=1+4d_{T'}(i,j)+2 =4d_{T'}(i,j)+3 >4.$
            \item $d_{T}(p_{i^+},v_{j^-})=d_{T}(p_{i^+},p'_{i})+d_{T}(p'_{i},p'_{j})+d_{T}(p'_{j},v_{j^-})=1+4d_{T'}(i,j)+3 =4d_{T'}(i,j)+4 >4.$
            \item $d_{T}(p_{i^-},v_{j^+})=d_{T}(p_{i^-},p'_{i})+d_{T}(p'_{i},p'_{j})+d_{T}(p'_{j},v_{j^+})=2+4d_{T'}(i,j)+2 =4d_{T'}(i,j)+4 >4.$
            \item $d_{T}(p_{i^-},v_{j^-})=d_{T}(p_{i^-},p'_{i})+d_{T}(p'_{i},p'_{j})+d_{T}(p'_{j},v_{j^-})=2+4d_{T'}(i,j)+3 =4d_{T'}(i,j)+5 >4.$
        \end{enumerate}
    \end{enumerate}
\end{proof}

\begin{claim}\label{cl:p-dist}
    $\forall x,y,z\in S'$ with $y \neq z$, we have $xy\prec_{S'} xz$ if and only if:
    $$d_{T}(p_x,v_y)< d_{T}(p_x,v_z)\leq 4\diam(T')+5.$$
\end{claim}
\begin{proof}
    For all $i,j$, we remark that $d_{T'}(i,j)\leq \diam(T')$ by the definition of the diameter. We can use this alongside Claim~\ref{cl:p-dist-exact} to deduce $d_{T}(p_x,v_z)\leq 4\diam(T')+5$ in all cases:
    \begin{enumerate}
        \item If $x=z$ we get $d_{T}(p_x,v_z)=1\leq 4\diam(T')+5$. 
        \item If $\set{x,z}=\set{i^+,i^-}$, then $d_T(p_x,v_z)=4\leq 4\diam(T')+5$.
        \item If $x\in\set{i^+,i^-}$ and $z\in \set{j^+,j^-}$ for $i\neq j$, then $d_T(p_x,v_z)\leq 4d_{T'}(i,j)+5\leq 4\diam(T')+5$.
    \end{enumerate}
    
    So, the second inequality holds in all cases. It remains to show that $d_{T}(p_x,v_y)< d_{T}(p_x,v_z)$.

    \begin{itemize}
        \item Assume that $x=y\in \set{i^+,i^-}$ for some $i$. Then, by Claim~\ref{cl:p-dist-exact}, we get $d_T(p_x,v_y)=1$. If $\exists j\neq i$ such that $z\in \set{j^+,j^-}$, then we get $d_{T}(p_x,v_z)>4$ by Claim~\ref{cl:p-dist-exact}. Otherwise, since $y\neq z$ we must have $\set{x,z}=\set{i^+,i^-}$. So, by Claim~\ref{cl:p-dist-exact}, we get $d_{T}(p_x,v_z)=4$. In either case, we get $d_T(p_x,v_y)=1<4\leq d_T(p_x,v_z)$.
        \item Assume that $\set{x,y}=\set{i^+,i^-}$ for some $i\in S$, then by Claim~\ref{cl:p-dist-exact} we get $d_T(p_x,v_y)=4$. Since $xy\prec_{S'} xz$, there must exist $j\neq i$ such that $z\in \set{j^+,j^-}$. This implies that $d_T(p_x,v_z)>4$. So we get $d_{T}(p_x,v_y)=4<d_{T}(p_x,v_z)$.  
        \item Otherwise, we must have $x\in \set{i^+,i^-}$ and $y\in \set{j^+,j^-}$ for some $i\neq j$. 
        \begin{itemize}
            \item If $\{y, z\} = \{j^+, j^-\}$, then as $xj^+ \prec_{S'} xj^-$ we must have $y = j^+, z = j^-$.
            \begin{itemize}
                \item If $x=i^+$, then $d_T(p_x,v_y)=4d_{T'}(i,j)+3<4d_{T'}(i,j)+4=d_T(p_x,v_z)$.
                \item Otherwise, $x=i^-$ and $d_T(p_x,v_y)=4d_{T'}(i,j)+4<4d_{T'}(i,j)+5=d_T(p_x,v_z)$.
            \end{itemize}
            \item Otherwise, we must have $z\in \set{k^+,k^-}$ for $k\notin \set{i,j}$. So, since $xy\prec_{S'} xz$ we must have $d_{T'}(i,j)< d_{T'}(i,k)$, and thus $d_{T'}(i, j) \leq d_{T'}(i, k) - 1$ since distances are integral. 
            This implies that
            $$d_{T}(p_x,v_y)\leq 4d_{T'}(i,j)+5<4d_{T'}(i,k)+3 = d_T(p'_i, p'_k) + 3 \leq d_{T}(p_x,v_z).$$
        \end{itemize}
    \end{itemize}

    The inequalities $d_{T}(p_x,v_y)< d_{T}(p_x,v_z)\leq 4\diam(T')+5$ therefore hold.
    
    Since $\prec_{S'}$ is a triangle order, exactly one of $xy\prec_{S'} xz$ or $xz\prec_{S'} xy$ is true so we get both directions.  
\end{proof}

    We next argue that our construction of $T$ is valid, i.e., that its edge weights only consist of positive integral weights.  It is clear that the first 4 steps of the construction of $T$ only use positive integral weight edges. It remains to prove that the edge from $O_x$ to $u_{x,y}$ is also positive for all $x, y \in S'$. This follows from Claim~\ref{cl:p-dist} as if $d_T(p_x,v_y)\leq 4\diam(T')+5< 5\diam(T')$ then $5\diam(T')-d_T(p_x,v_y)>0$ as long as $T'$ has diameter at least $6$.

    We next argue that distinct vertices $u, v$ of $G_S$ share an edge if and only if $d_T(u, v) \leq 10 \diam(T') - 1$.  First observe that 
    by construction, $\forall x,y\in S'$ the path from $u_{x,y}$ to $p_x$ uses the vertex $O_x$. So: 

    $$d_{T}(p_x,u_{x,y})=d_{T}(p_x,O_x)+d_{T}(O_x,u_{x,y})>5\diam(T').$$ 

    We then get:
    
    \begin{itemize}
        \item $O$ is connected to $T$ by an edge of weight $5\diam(T')$ by step~\ref{tree:O}. Let $p_O$ be the neighbour of $O$ we get $d_T(O,p_O)=5\diam(T')$. Then, by step~\ref{tree:mult} of the construction, assuming that $x\in \set{i^+,i^-}$ for some $i\in S$, we get $d_T(p_O,p'_i)\leq 4\diam(T')$. Finally, by step~\ref{tree:duplicate}, $d_T(p'_i,p_x)\leq 2$. So, by combining these we get: 
        \begin{align*}
            \forall x\in S' \ \  &d_{T}(O,p_x)=d_T(O,p_O)+d_T(p_O,p'_i)+d_T(p'_i,p_x)\\
            \Longrightarrow\ \ & 5\diam(T')\leq d_T(O,p_x)\leq 9\diam(T')+2.
        \end{align*}
        This implies $O$ is at distance at most $9\diam(T')+3<10\diam(T')$ from all $v_x$ for $x\in S'$ and at distance at least $10\diam(T')$ from $u_{x,y}$ for all $x,y\in S'$.  The distance relationships that involve $O$ are therefore correct.
        \item By Claim~\ref{cl:p-dist}, $\forall x,y\in S'$, $d_{T}(v_x,v_y)\leq 4\diam(T')+5<10\diam(T')$, which is correct.
        \item For all $x, y, z \in S'$, consider $u_{x, y} \in U_x$ and $v_z$.  Note that all paths between $u_{x,y}$ and vertices outside of $U_x$  use $p_x$. 
        \begin{itemize}
            \item If $u_{x, y}, v_z$ share an edge, then $xz\prec_{S'}xy$.  Using Claim~\ref{cl:p-dist}, 
            $$
            d_T(u_{x,y}, v_z) = d_T(u_{x,y}, p_x) + d_T(p_x, v_z) < d_T(u_{x,y},p_x) + d_T(p_x,v_y) = 10 \diam(T')
            $$
            and so $d_T(u_{x,y}, v_z)$ is at most $10 \diam(T') - 1$ since distances are integral. 
            \item If $u_{x, y}, v_z$ is not an edge, then it must be that either $y=z$ or $xy\prec_{S'}xz$. Again using Claim~\ref{cl:p-dist}, 
            $$
            d_T(u_{x,y}, v_z) = d_T(u_{x,y}, p_x) + d_T(p_x, v_z) \geq d_T(u_{x,y},p_x) + d_T(p_x,v_y) = 10 \diam(T').
            $$
        \end{itemize}

        \item $\forall x,y,z\in S'$, then the path from $u_{x,y}$ to $u_{x,z}$ is the path from $u_{x,y}$ to $O_x$ and the path from $O_x$ to $u_{x,z}$. So, their distance is: $$d_T(u_{x,y},O_x)+d_T(O_x,u_{x,z})=5\diam(T')-d_T(p_x,v_y)+5\diam(T')-d_T(p_x,v_z)<10\diam(T').$$
        
        So $U_x$ is a clique.
        \item $\forall x_1,y_1,x_2,y_2$ with $x_1\neq x_2$, the distance from $u_{x_1,y_1}$ to $u_{x_2,y_2}$ is at least the distance from $O_{x_1}$ to $p_{x_1}$ and the distance from $O_{x_2}$ to $p_{x_2}$. Since both distances are $5\diam(T)$, this implies that $d_T(u_{x_1,y_1},u_{x_2,y_2})\geq 10\diam(T')$.
    \end{itemize}
    This defines every case so $T$ is a leaf root of $G_S$.
\end{proof}

It remains to prove that if $G_S$ is a leaf power, then $\prec_S$ is realizable.

\subsection{Leaf Powers Correspond to Realizable Triangle Orders}\label{sec:LP-to-TOC}

In this section, we will prove that any triangle order which is mapped to a leaf power must be realizable.

In the remainder, we let $T$ be a leaf root of $G_S$.
We show that the subgraph induced by the tree containing the leaves of the form $v_{i^-}$ for $i\in S$ realizes $\prec_S$. That is, we wish to show that $\forall i,j,k\in S$, if $ij\prec_S ik$ then $d_T(v_{i^-},v_{j^-})<d_T(v_{i^-},v_{k^-})$.

First, we need to prove two claims. 

\begin{claim}
    \label{claim 1}
    $\forall i\in S$, $\forall z\in S' \setminus \{i^+, i^-\}$:
    $$d_T(v_{i^+}, v_z)<d_T(v_{i^-}, v_z)$$
\end{claim}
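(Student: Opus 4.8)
The goal is to show that for each $i \in S$ and each $z \in S' \setminus \{i^+, i^-\}$, the leaf $v_{i^+}$ is strictly closer to $v_z$ than $v_{i^-}$ is. The natural approach is to exploit the gadget structure around the pair $\{i^+, i^-\}$: in $\prec_{S'}$ we have $zi^+ \prec_{S'} zi^-$ for all such $z$, and this ordering is encoded in $G_S$ via the edges between the $U_z$-clique and the vertices $v_{i^+}, v_{i^-}$. Concretely, whenever $zi^+ \prec_{S'} zi^-$, the vertex $u_{z, i^-}$ is adjacent to $v_{i^+}$ (since $zi^+ \prec_{S'} zi^-$) but not to $v_{i^-}$ (since $zi^- \not\prec_{S'} zi^-$ as the order is strict). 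So in the leaf root $T$ we have $d_T(u_{z,i^-}, v_{i^+}) \le \theta < d_T(u_{z,i^-}, v_{i^-})$ where $\theta$ is the threshold. I would also want a witness on the other side: the vertex $O$ is adjacent to both $v_{i^+}$ and $v_{i^-}$, and more importantly $O$ is adjacent to every $v_x$ but to no $u_{x,y}$, which should let me argue that all the $v_x$'s cluster together away from the $U$-cliques.

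The plan is to set up the hypotheses of Lemma~\ref{lem:twins} (or possibly Lemma~\ref{lem:4-PC-split}) with $a_1 = v_{i^+}$, $a_2 = v_{i^-}$, and the pair playing the role of $\{b, c\}$ built from $v_z$ together with some auxiliary witness, and $x$ being a vertex from a $U$-clique or $O$. First I would collect the relevant adjacency/non-adjacency facts from Table~\ref{tab:G_S}: that $u_{i^+, i^+}$ is adjacent to $v_{i^+}$ but $u_{i^+,i^+}$-to-$v_{i^-}$ is a non-edge (since $i^+ i^+ \prec_{S'} i^+ i^-$), that $O$ sees all $v_x$ and no $u$-vertex, and that $u_{i^-,i^-}$ sees $v_{i^-}$... wait, I should be careful: $u_{i^-,i^-}$ adjacent to $v_t$ iff $i^- t \prec_{S'} i^- i^-$, which never holds, so $u_{i^-,i^-}$ is adjacent to no $v_t$ at all except via its clique $U_{i^-}$. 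The cleaner witnesses are $u_{z,i^+}$ and $u_{z,i^-}$: we have $u_{z,i^-} \sim v_{i^+}$, $u_{z,i^-} \not\sim v_{i^-}$, and $u_{z,i^+} \not\sim v_{i^+}$ (since $z i^+ \not\prec_{S'} z i^+$), $u_{z,i^+} \not\sim v_{i^-}$ (since $zi^+ \prec_{S'} z i^-$, not the reverse). Translating to distances in $T$: $d_T(u_{z,i^-}, v_{i^+}) \le \theta$ while $d_T(u_{z,i^-}, v_{i^-}) > \theta$, and both $d_T(u_{z,i^+}, v_{i^+}) > \theta$ and $d_T(u_{z,i^+}, v_{i^-}) > \theta$.

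Then I would invoke the four-point condition on the quadruple $\{v_{i^+}, v_{i^-}, v_z, w\}$ for a suitably chosen fourth point $w$ (a member of $U_z$, so that $w$ is far from $v_{i^-}$ but the clique $U_z$ keeps it near $v_z$, exploiting that $u_{z,y} \sim u_{z,y'}$ always). The key inequality to establish is that $v_{i^+}$ and $v_{i^-}$ are "separated" from $v_z$ in the way Lemma~\ref{lem:twins} requires, using the $U$-clique vertices and $O$ as the separating witnesses; then the lemma's conclusion $d_T(a_2, b) < d_T(a_2, c)$ specializes to exactly $d_T(v_{i^-}, v_z) > d_T(v_{i^+}, v_z)$, which is the claim. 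The main obstacle I anticipate is choosing the right auxiliary witnesses and checking that the strict distance inequalities needed to trigger the four-point lemma genuinely follow from the adjacency data — in particular, ruling out equality $d_T(v_{i^+}, v_z) = d_T(v_{i^-}, v_z)$, which requires finding a witness vertex that is adjacent to exactly one of $v_{i^+}, v_{i^-}$ and using it to break the tie. I expect that $u_{i^+, i^-}$ (adjacent to $v_{i^+}$ via $i^+ i^+ \prec i^+ i^-$... no — $u_{i^+,i^-} \sim v_y$ iff $i^+ y \prec_{S'} i^+ i^-$, which holds for $y = i^+$ and for $y$ such that $i^+ y \prec_{S'} i^+ i^-$) or the vertex $O$ provides the needed tie-breaker, and most of the work will be bookkeeping the cases for where $z$ lies relative to $i$ (whether $z \in \{j^+, j^-\}$ for $j \ne i$, or $z$ is one of the copies).
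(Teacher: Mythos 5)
Your plan follows the same route as the paper: apply Lemma~\ref{lem:4-PC-split} with $a_1 = v_{i^+}$, $a_2 = v_{i^-}$, $b_1 = v_z$, and $b_2 = u_{z,i^-}$, then break the tie using the fact that $u_{z,i^-}$ is adjacent to $v_{i^+}$ but not to $v_{i^-}$. You also correctly identify $y = u_{z,i^+}$ as the witness certifying that $\{v_z, u_{z,i^-}\}$ clusters away from $\{v_{i^+}, v_{i^-}\}$ (it is non-adjacent to both $v_{i^+}$ and $v_{i^-}$, adjacent to $v_z$ since $zz \prec_{S'} zi^+$, and in the clique $U_z$ with $u_{z,i^-}$). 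However, there is a genuine gap on the other side of the hypothesis of Lemma~\ref{lem:4-PC-split}: you never pin down a vertex $x$ with $\max(d_T(v_{i^+},x), d_T(v_{i^-},x)) < \min(d_T(v_z,x), d_T(u_{z,i^-},x))$, and the candidates you float do not work. The vertex $O$ fails because it is adjacent to $v_z$ as well as to $v_{i^\pm}$, so adjacency gives no strict separation from $b_1 = v_z$. Your claim that $u_{i^+,i^+}$ is adjacent to $v_{i^+}$ is also false: $u_{x,y} \sim v_t$ iff $xt \prec_{S'} xy$, and nothing precedes $i^+i^+$, so $u_{i^+,i^+}$ is adjacent to no $v_t$ at all (you caught this for $u_{i^-,i^-}$ but not here). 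The witness the paper uses is $x = u_{i^+,z}$: since $i^+i^+ \prec_{S'} i^+z$ and $i^+i^- \prec_{S'} i^+z$, it is adjacent to both $v_{i^+}$ and $v_{i^-}$, while it is non-adjacent to $v_z$ (as $i^+z \not\prec_{S'} i^+z$) and to $u_{z,i^-}$ (different cliques). With that witness supplied, the rest of your argument goes through exactly as in the paper. One further caution: your fallback of using Lemma~\ref{lem:twins} here would be circular, since that lemma's hypotheses ($d_T(a_1,b) < d_T(a_2,b)$ and $d_T(a_1,c) < d_T(a_2,c)$) are precisely instances of the claim you are trying to prove; the paper reserves Lemma~\ref{lem:twins} for the later step where Claim~\ref{claim 1} is already available.
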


That is, for any vertex corresponding to $S'$ except $v_{i^+}$ and $v_{i^-}$ will be closer to $v_{i^+}$ than to $v_{i^-}$

\begin{proof}
    We apply Lemma~\ref{lem:4-PC-split} for $a_1=v_{i^+}$, $a_2=v_{i^-}$, $b_1=v_{z}$, $b_2=u_{z,i^-}$ with $x=u_{i^+,z}$ and $y=u_{z,i^+}$. 
    Since $ii \prec_S ij$ for $j \neq i$, by construction we have $i^+ i^+ \prec_{S'} i^+ z$ and $i^+ i^- \prec_{S'} i^+ z$, implying that $(u_{i^+, z}, v_{i^+})$ and $(u_{i^+, z}, v_{i^-})$ are edges of $G_S$.  Moreover, $i^+ z \not \prec_{S'} i^+z$ and so $(u_{i^+,z}, v_z)$ is \emph{not} an edge, and $(u_{i^+,z},u_{z,i^-})$ is not an edge since $z \neq i^+$.  Since $T$ is a leaf root of $G_S$, it follows that $\max(d_T(v_{i^+}, u_{i^+,z}), d_T(v_{i^-}, u_{i^+,z})) < \min(d_T(v_z, u_{i^+,z}), d_T(u_{z,i^-}, u_{i^+,z}))$, which satisfies the first condition of Lemma~\ref{lem:4-PC-split}.  The second condition of the lemma can easily be verified in a similar manner (the only difference in the argument being that $(u_{z,i^-}, u_{z,i^+})$ is an edge because $U_z$ is a clique).
    We get:
    $$d_T(v_{i^+},v_{z})+d_T(v_{i^-},u_{z,i^-})=d_T(v_{i^+},u_{z,i^-})+d_T(v_{i^-},v_{z})$$
    $$\Longrightarrow d_T(v_{i^+},v_{z})-d_T(v_{i^-},v_{z})=d_T(v_{i^+},u_{z,i^-})-d_T(v_{i^-},u_{z,i^-}).$$

    Since $zi^+ \prec_{S'} zi^-$, $(u_{z,i^-},v_{i^+})$ is an edge, and because $(u_{z,i^-},v_{i^-})$ is not an edge, 
    $u_{z,i^-}$ is closer to $v_{i^+}$ than to $v_{i^-}$.  Thus, the right-hand side in the above equality is strictly below $0$, and so we must have $d_T(v_{i^+}, v_z)<d_T(v_{i^-}, v_z)$.
\end{proof}

\begin{claim}
    \label{claim 2}
    $\forall i\in S$, $\forall z,z'\in S'$ if $i^-z\prec_{S'} i^-z'$, then: 
    $$d_T(u_{i^-,i^+},v_z)<d_T(u_{i^-,i^+},v_{z'}).$$
\end{claim}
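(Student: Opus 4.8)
\textbf{Proof plan for Claim~\ref{claim 2}.}
The plan is to mirror the structure of the proof of Claim~\ref{claim 1}, using Lemma~\ref{lem:4-PC-split} to reduce a question about distances from $u_{i^-,i^+}$ to a question about distances from a vertex $v_{z'}$ (which is directly controlled by the edges of $G_S$). Fix $i \in S$ and $z, z' \in S'$ with $i^- z \prec_{S'} i^- z'$. The key observation is that $u_{i^-, i^+}$ and $v_{i^-}$ should play symmetric roles relative to the ``outside'' world: by the previous structural facts, both are on the $i^-$-side of the tree. So the idea is to apply Lemma~\ref{lem:4-PC-split} with $a_1 = v_{i^-}$, $a_2 = u_{i^-, i^+}$ (the two ``close to each other'' vertices), and with $b_1, b_2$ a suitable pair on the $z'$-side — most naturally $b_1 = v_{z'}$ and $b_2 = u_{z', i^-}$ (or $u_{z', w}$ for an appropriate $w$), exactly as in Claim~\ref{claim 1}. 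The separating vertices $x, y$ would be of the form $u_{i^-, z'}$ and $u_{z', i^-}$ respectively.

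The steps I would carry out are: (1) verify the two hypotheses of Lemma~\ref{lem:4-PC-split} purely from the edge/non-edge structure of $G_S$ recorded in Table~\ref{tab:G_S}. For the first, I need $\max(d_T(v_{i^-}, u_{i^-,z'}), d_T(u_{i^-,i^+}, u_{i^-,z'})) < \min(d_T(v_{z'}, u_{i^-,z'}), d_T(u_{z',i^-}, u_{i^-,z'}))$. Here $(u_{i^-,z'}, v_{i^-})$ is an edge since $i^- i^- \prec_{S'} i^- z'$, and $(u_{i^-,z'}, u_{i^-,i^+})$ is an edge since $U_{i^-}$ is a clique; meanwhile $(u_{i^-,z'}, v_{z'})$ is not an edge since $i^- z' \not\prec_{S'} i^- z'$, and $(u_{i^-,z'}, u_{z',i^-})$ is not an edge since $i^- \neq z'$. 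The second hypothesis is checked symmetrically with $y = u_{z', i^-}$. (2) Conclude the sum equality $d_T(v_{i^-}, v_{z'}) + d_T(u_{i^-,i^+}, u_{z',i^-}) = d_T(v_{i^-}, u_{z',i^-}) + d_T(u_{i^-,i^+}, v_{z'})$, and likewise the ``in particular'' biconditional: $d_T(u_{i^-,i^+}, v_z) \le d_T(u_{i^-,i^+}, v_{z'})$ if and only if $d_T(v_{i^-}, v_z) \le d_T(v_{i^-}, v_{z'})$ (applying the lemma once with the pair $(v_z, \cdot)$ and once with $(v_{z'}, \cdot)$, or more cleanly invoking the biconditional form directly with $a_1 = v_{i^-}, a_2 = u_{i^-,i^+}, b_1 = v_z, b_2 = v_{z'}$ after checking the separation hypotheses for that instantiation as well). (3) Reduce to showing $d_T(v_{i^-}, v_z) < d_T(v_{i^-}, v_{z'})$, i.e. that the $v_{i^-}$-ranking is already correct; this should either follow from an earlier claim about $v_{i^-}$ ordering $S'$ consistently with $\prec_{S'}$, or be provable by the same Lemma~\ref{lem:4-PC-split} trick using $U_{i^-}$ witnesses, using strictness coming from a genuine edge/non-edge split (some $u_{i^-, w}$ adjacent to $v_z$ but not $v_{z'}$ when $i^- z \prec_{S'} i^- z'$).

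The main obstacle I anticipate is obtaining \emph{strict} inequality rather than just $\le$. Lemma~\ref{lem:4-PC-split} on its own only yields the sum equality and hence a biconditional between two non-strict comparisons; strictness must be injected via an honest edge/non-edge asymmetry in $G_S$, analogous to how Claim~\ref{claim 1} used that $(u_{z,i^-}, v_{i^+})$ is an edge while $(u_{z,i^-}, v_{i^-})$ is not. I would need to find the right witness vertex $u_{i^-, w}$ (or $u_{z', w}$) that is adjacent to exactly one of $v_z, v_{z'}$ given $i^- z \prec_{S'} i^- z'$ — a natural candidate is $u_{i^-, z'}$ itself, which is adjacent to $v_z$ (since $i^- z \prec_{S'} i^- z'$) but not to $v_{z'}$ — and then run one more application of the lemma to transfer the strict comparison $d_T(u_{i^-,z'}, v_z) < d_T(u_{i^-,z'}, v_{z'})$ first to $v_{i^-}$ and then to $u_{i^-,i^+}$. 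A secondary subtlety is handling the degenerate cases where $z$ or $z'$ equals $i^+$ or $i^-$, or $z = z'$ is disallowed: one must double-check that the chosen witnesses remain distinct vertices and that the cited adjacencies still hold (e.g. when $z' = i^+$, using the facts $i^- i^- \prec_{S'} i^- i^+$ and that $U_{i^-}$ is a clique), so the proof should either dispatch these small cases separately or phrase the witness choice so it is uniformly valid.
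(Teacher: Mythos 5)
You have correctly located the source of strictness — the witness $u_{i^-,z'}$, which is adjacent to $v_z$ (because $i^-z\prec_{S'}i^-z'$) but not to $v_{z'}$, so $d_T(u_{i^-,z'},v_z)\le\theta<d_T(u_{i^-,z'},v_{z'})$ — and you are right that the remaining work is to transfer this strict comparison to $u_{i^-,i^+}$ via Lemma~\ref{lem:4-PC-split}. However, the transfer route you propose does not go through. Every instantiation you suggest puts a $v$-vertex (namely $v_{i^-}$) on the ``$a$-side'' while $b_1,b_2$ are also $v$-vertices, and then the second separator $y$ required by Lemma~\ref{lem:4-PC-split} — a vertex strictly closer to both $v_z$ and $v_{z'}$ than to both $v_{i^-}$ and the other $a$-vertex — need not exist in $G_S$: the only vertex adjacent to all $v$-vertices and to no $u$-vertex is $O$, and $O$ is adjacent to $v_{i^-}$ as well, so it gives no separation; a vertex $u_{w,t}$ with $w\notin\{i^-\}$ adjacent to both $v_z$ and $v_{z'}$ but not to $v_{i^-}$ would require order relations (e.g.\ $zz'\prec_{S'}zi^-$) that are not implied by $i^-z\prec_{S'}i^-z'$. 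Your fallback, reducing to ``$v_{i^-}$'s ranking is already correct,'' is circular: the statement $d_T(v_{i^-},v_z)<d_T(v_{i^-},v_{z'})$ for the relevant pairs is precisely the conclusion of the whole reduction (Lemma~\ref{lem:LP to TOC}), which is derived \emph{from} Claim~\ref{claim 2} together with Lemma~\ref{lem:twins}; there is no earlier claim establishing it.

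The fix, which is what the paper does, is to keep \emph{both} $a$-vertices inside the clique $U_{i^-}$: take $a_1=u_{i^-,i^+}$, $a_2=u_{i^-,z'}$, $b_1=v_z$, $b_2=v_{z'}$, with separators $x=u_{i^-,i^-}$ (adjacent to all of $U_{i^-}$ but to no $v$-vertex, since $i^-t\prec_{S'}i^-i^-$ never holds) and $y=O$ (adjacent to all $v$-vertices and to no $u$-vertex). The four-point identity then gives $d_T(u_{i^-,i^+},v_z)-d_T(u_{i^-,i^+},v_{z'})=d_T(u_{i^-,z'},v_z)-d_T(u_{i^-,z'},v_{z'})<0$ in a single step, with no detour through $v_{i^-}$. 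So the missing idea is the role of the ``private'' vertices $u_{x,x}$ and $O$ as universal separators between each clique $U_x$ and the set of $v$-vertices; without that, the separation hypotheses you defer to step (1) cannot all be verified.
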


That is, the order of proximity of $u_{i^-,i^+}$ to the vertices $v_z$ for $z\in S'\setminus \{i^+,i^-\}$ is the one induced by $S'$ for $i^-$ (and $i^+$ due to the definition of $\prec_{S'}$).
\begin{proof}
    We apply Lemma~\ref{lem:4-PC-split} for $a_1=u_{i^-,i^+}$, $a_2=u_{i^-,z'}$, $b_1=v_z$, $b_2=v_{z'}$ with $x=u_{i^-,i^-}$ and $y=O$, and noting that $z \neq z'$  we get:
    $$d_T(u_{i^-,i^+},v_z)+d_T(u_{i^-,z'},v_{z'})=d_T(u_{i^-,i^+},v_{z'})+d_T(u_{i^-,z'},v_z)$$
    $$\Longrightarrow d_T(u_{i^-,i^+},v_z)-d_T(u_{i^-,i^+},v_{z'})=d_T(u_{i^-,z'},v_z)-d_T(u_{i^-,z'},v_{z'}).$$

    Because $i^-z \prec_{S'} i^-z'$, $u_{i^-,z'}$ is closer to $v_z$ than to $v_{z'}$ so we must have $d_T(u_{i^-,i^+},v_z)<d_T(u_{i^-,i^+},v_{z'})$
\end{proof}

\begin{lm}\label{lem:LP to TOC}
    If $G_S$ is a leaf power, then $\prec_S$ is realizable by some tree.
\end{lm}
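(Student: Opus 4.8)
The plan is to take the leaf root $T$ of $G_S$ and show that the sub-tree spanned by the leaves $\{v_{i^-} : i \in S\}$ (with the induced metric $d_T$) realizes $\prec_S$. Concretely, fix a triple $i,j,k \in S$ with $ij \prec_S ik$; I must show $d_T(v_{i^-}, v_{j^-}) < d_T(v_{i^-}, v_{k^-})$. The first move is to reduce the statement about $v_{i^-}$ to a statement about the witness vertex $u_{i^-,i^+}$, using Claim~\ref{claim 2}: since $ij \prec_S ik$ translates into $i^- j^- \prec_{S'} i^- k^-$ by the definition of $\prec_{S'}$, Claim~\ref{claim 2} already gives $d_T(u_{i^-,i^+}, v_{j^-}) < d_T(u_{i^-,i^+}, v_{k^-})$. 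So the crux is to transfer this ordering from $u_{i^-,i^+}$ back to $v_{i^-}$ itself.

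To perform that transfer I would invoke Lemma~\ref{lem:twins} (the ``twins'' lemma), which is designed exactly for this: it says that if $a_1$ is closer than $a_2$ to both members of a pair $\{b,c\}$, and some vertex $x$ sits on the branch hanging off $a_2$ (i.e. $d_T(a_2,x) < d_T(a_1,x) < d_T(b,x) < d_T(c,x)$), then $a_2$ inherits the $b$-vs-$c$ proximity order of $x$. I would apply it with $a_1 = v_{i^+}$, $a_2 = v_{i^-}$, $b = v_{j^-}$, $c = v_{k^-}$, and $x = u_{i^-,i^+}$. The hypothesis $d_T(v_{i^+}, v_{j^-}) < d_T(v_{i^-}, v_{j^-})$ and $d_T(v_{i^+}, v_{k^-}) < d_T(v_{i^-}, v_{k^-})$ is exactly Claim~\ref{claim 1} applied to $z = j^-$ and $z = k^-$. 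The ordering $d_T(u_{i^-,i^+}, v_{j^-}) < d_T(u_{i^-,i^+}, v_{k^-})$ is Claim~\ref{claim 2}. What remains to check for the $x$-hypothesis of Lemma~\ref{lem:twins} is the chain $d_T(v_{i^-}, u_{i^-,i^+}) < d_T(v_{i^+}, u_{i^-,i^+}) < d_T(v_{j^-}, u_{i^-,i^+})$ (with $v_{j^-}$ playing the role of $b$, the nearer of the pair). These follow from the edge structure of $G_S$ read off Table~\ref{tab:G_S}: $u_{i^-,i^+}$ is adjacent to $v_{i^-}$ (since $i^- i^- \prec_{S'} i^- i^+$) but not to $v_{i^+}$ (since $i^- i^+ \not\prec_{S'} i^- i^+$), giving the first strict inequality; and $u_{i^-,i^+}$ is not adjacent to $v_{j^-}$ while it \emph{is} within the leaf-power threshold of $v_{i^+}$ — here I would need the adjacency $(u_{i^-,i^+}, v_{i^+})$, which holds because $i^- i^+ \prec_{S'} i^- j^-$ translates the construction rule (for $z = i^+$, $y = i^+$... ) — more carefully, $v_{i^+}$ is adjacent to $u_{i^-,i^+}$ iff $i^- i^+ \prec_{S'} i^- i^+$, which is false, so I should instead get the needed inequality by comparing with $v_{j^+}$ or by using that $u_{i^-,i^+}$ is adjacent to $O$'s neighborhood is empty — I will in fact derive $d_T(v_{i^+}, u_{i^-,i^+}) < d_T(v_{j^-}, u_{i^-,i^+})$ from Claim~\ref{claim 2} with $z = i^+$, $z' = j^-$ together with Claim~\ref{claim 1}, chaining $d_T(u_{i^-,i^+}, v_{i^+}) \le d_T(u_{i^-,i^+}, v_{i^-})$... which is the wrong direction. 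The clean route is: $i^+ j^- \prec_{S'} i^+ \text{(anything)}$? No — I would instead establish the middle inequality by noting $v_{i^+}$ is non-adjacent to $u_{i^-, i^+}$ but $U_{i^-}$ being a clique plus adjacency of $u_{i^-, j^+}$ to $v_{i^+}$ gives a short path.

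The main obstacle, and the step I expect to require the most care, is verifying precisely this $x$-branch hypothesis of Lemma~\ref{lem:twins} — namely pinning down the four-way distance ordering $d_T(v_{i^-}, u_{i^-,i^+}) < d_T(v_{i^+}, u_{i^-,i^+}) < d_T(v_{j^-}, u_{i^-,i^+}) < d_T(v_{k^-}, u_{i^-,i^+})$. The last inequality is Claim~\ref{claim 2}; the subtle ones are the first two, which must be extracted purely from which pairs are edges/non-edges in $G_S$ (Table~\ref{tab:G_S}) combined with the defining threshold property of the leaf root $T$, possibly together with one more application of Lemma~\ref{lem:4-PC-split} to relate $d_T(v_{i^+}, u_{i^-,i^+})$ to a distance we already control. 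Once that ordering is in hand, Lemma~\ref{lem:twins} yields $d_T(v_{i^-}, v_{j^-}) < d_T(v_{i^-}, v_{k^-})$ directly, and since this holds for every triple with $ij \prec_S ik$, the tree spanned by $\{v_{i^-}\}$ realizes $\prec_S$, completing the proof. Finally, because this direction together with Lemma~\ref{lem:TOC to LP} shows the reduction is correct, and TOC is NP-hard by~\cite{SF06}, leaf power recognition is NP-hard.
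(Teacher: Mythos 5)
Your overall strategy is exactly the paper's: reduce the claim to the distinct-$i,j,k$ case, use Claim~\ref{claim 1} (with $z=j^-$ and $z=k^-$) to get the two hypotheses $d_T(v_{i^+},v_{j^-})<d_T(v_{i^-},v_{j^-})$ and $d_T(v_{i^+},v_{k^-})<d_T(v_{i^-},v_{k^-})$, establish the chain $d_T(v_{i^-},x)<d_T(v_{i^+},x)<d_T(v_{j^-},x)<d_T(v_{k^-},x)$ for $x=u_{i^-,i^+}$, and then apply Lemma~\ref{lem:twins} with $a_1=v_{i^+}$, $a_2=v_{i^-}$, $b=v_{j^-}$, $c=v_{k^-}$. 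The last link of the chain is Claim~\ref{claim 2} with $(z,z')=(j^-,k^-)$, as you say.

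The gap is in the first two links of the chain, where you visibly go in circles and end with a hand-wave about ``a short path'' through $U_{i^-}$. There is nothing subtle to do here and no further use of Lemma~\ref{lem:4-PC-split} is needed: both inequalities are again instances of Claim~\ref{claim 2}, whose hypothesis is only that $i^-z\prec_{S'}i^-z'$ for $z,z'\in S'$ (the claim is not restricted to $z,z'\notin\{i^+,i^-\}$). By the construction of $\prec_{S'}$ we have $i^-i^-\prec_{S'}i^-i^+$ (the ``$xx\prec_{S'}xy$'' rule) and $i^-i^+\prec_{S'}i^-j^-$ (the ``$\{x,y\}=\{i^+,i^-\}$, $z\notin\{i^+,i^-\}$'' rule), so Claim~\ref{claim 2} applied with $(z,z')=(i^-,i^+)$ and with $(z,z')=(i^+,j^-)$ yields $d_T(u_{i^-,i^+},v_{i^-})<d_T(u_{i^-,i^+},v_{i^+})$ and $d_T(u_{i^-,i^+},v_{i^+})<d_T(u_{i^-,i^+},v_{j^-})$ directly. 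You actually name the application $(z,z')=(i^+,j^-)$ at one point and then talk yourself out of it by trying to chain it through $d_T(u_{i^-,i^+},v_{i^-})$, which is unnecessary. (Your alternative edge/non-edge argument for the first link --- $(u_{i^-,i^+},v_{i^-})\in E_S$ while $(u_{i^-,i^+},v_{i^+})\notin E_S$ --- is also valid.) With those three applications of Claim~\ref{claim 2} the chain is complete and the rest of your argument goes through as the paper's does.
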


\begin{proof}
    We wish to show that $\forall i,j,k\in S$, if $ij\prec_S ik$ then $d_T(v_{i^-},v_{j^-})<d_T(v_{i^-},v_{k^-})$.  Note that this trivially holds if $i = j \neq k$, and that no such relation is possible if $j = k$ or $i = k$.  We may thus assume that $i, j, k$ are distinct.
    
    First, consider two applications of Claim~\ref{claim 1} with $i$ and $\ml{z} \in \set{j^-,k^-}$. Then, we get $d_T(v_{i^+},v_{j^-})< d_T(v_{i^-},v_{j^-})$ and $d_T(v_{i^+},v_{k^-})< d_T(v_{i^-},v_{k^-})$. Next, we can consider three applications of Claim~\ref{claim 2} with $i$ 
    and every $(z, z') \in \set{(i^-,i^+),(i^+,j^-),(j^-,k^-)}$, then, for $x=u_{i^-,i^+}$ we have $d_T(v_{i^-},x)<d_T(v_{i^+},x)<d_T(v_{j^-},x)<d_T(v_{k^-},x)$. In particular, we are able to apply Lemma~\ref{lem:twins} using $a_1=v_{i^+}$, $a_2=v_{i^-}$, $b=v_{j^-}$ and $c=v_{k^-}$, which implies that $d_T(v_{i^-},v_{j^-})<d_T(v_{i^-},v_{k^-})$. This concludes the proof.
\end{proof}

We have shown that: (1) if a given TOC instance $\prec_S$ is realizable by some weighted tree, then $G_S$ is a leaf power (Lemma~\ref{lem:TOC to LP}); and (2) if $G_S$ is a leaf power, then $\prec_S$ is realizable by some weighted tree (Lemma~\ref{lem:LP to TOC}).
We therefore establish that a triangle order $\prec_S$ is realizable by some tree if and only if $G_S$ is a leaf power. Moreover, the size of $G_S$ polynomial in the size of $S$ (the graph $G_S$ has $O(|S|^2)$ vertices because of the $U_x$ sets, and $O(|S|^4)$ edges because the $U_x$'s are cliques), providing a polynomial-time reduction from the TOC problem to the leaf power recognition problem.

\begin{lemma}\label{lem:LP-NPhard}
    Recognizing leaf powers is NP-hard.
\end{lemma}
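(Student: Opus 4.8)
The statement to prove is Lemma~\ref{lem:LP-NPhard}, that recognizing leaf powers is NP-hard. The plan is to assemble the pieces already in place. The triangle ordinal clustering problem (TOC) is NP-hard by~\cite{SF06}, so it suffices to give a polynomial-time many-one reduction from TOC to leaf power recognition. Given a TOC instance $(S, \prec_S)$, I would first construct the extended triangle order $\prec_{S'}$ on the doubled ground set $S'$ as in Section~\ref{sec:reduction}, then build the graph $G_S = (V_S, E_S)$ exactly as described there. The size bound is immediate: $|V_S| = O(|S|^2)$ because of the vertices $v_x$, the witness sets $U_x$, and $O$; and $|E_S| = O(|S|^4)$ since each $U_x$ is a clique. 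All adjacencies are determined by comparisons in $\prec_{S'}$, which in turn are determined by the input order $\prec_S$, so the whole construction runs in time polynomial in $|S|$.

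The correctness of the reduction is the conjunction of the two directions already proved. Lemma~\ref{lem:TOC to LP} (via Claims~\ref{cl:p-dist-exact} and~\ref{cl:p-dist}) shows that if $\prec_S$ is realizable by a weighted tree, then $G_S$ is a leaf power, by explicitly building a leaf root $T$ from a realizing tree $T'$ of diameter at least $6$ through the five-step construction and verifying all distances against the threshold $10\diam(T') - 1$. Conversely, Lemma~\ref{lem:LP to TOC} (via Claims~\ref{claim 1} and~\ref{claim 2}, together with the four-point-condition consequences Lemmas~\ref{lem:4-PC-split} and~\ref{lem:twins}) shows that if $G_S$ is a leaf power, then the subtree spanning the leaves $\{v_{i^-} : i \in S\}$ of any leaf root $T$ realizes $\prec_S$. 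Chaining these, $(S, \prec_S)$ is a yes-instance of TOC if and only if $G_S$ is a yes-instance of leaf power recognition. Hence leaf power recognition is NP-hard.

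Since all the technical work — the construction of $G_S$, the polynomial size estimate, and both directions of the equivalence — is contained in the preceding lemmas, the proof of Lemma~\ref{lem:LP-NPhard} itself is essentially a one-line invocation: combine Lemma~\ref{lem:TOC to LP}, Lemma~\ref{lem:LP to TOC}, and the polynomiality of the map $(S,\prec_S) \mapsto G_S$ with the NP-hardness of TOC from~\cite{SF06}. There is no remaining obstacle at this level; the genuine difficulty has already been absorbed into establishing Lemma~\ref{lem:LP to TOC}, where one must argue that an arbitrary leaf root of $G_S$ — about which we a priori know nothing structural — is forced to arrange the witness gadgets $U_x$ and the vertices $v_x$ consistently enough that the four-point condition pins down the desired ordering of the $v_{i^-}$ distances. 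If I were writing the proof from scratch, that is the step I would expect to be the main obstacle, and it is precisely where the novel use of Lemmas~\ref{lem:4-PC-split} and~\ref{lem:twins} does the heavy lifting.
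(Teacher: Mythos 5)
Your proposal is correct and matches the paper's argument exactly: the paper also proves Lemma~\ref{lem:LP-NPhard} by combining Lemma~\ref{lem:TOC to LP} and Lemma~\ref{lem:LP to TOC} with the observation that $G_S$ has $O(|S|^2)$ vertices and $O(|S|^4)$ edges, yielding a polynomial-time reduction from the NP-hard TOC problem of~\cite{SF06}. Your size estimate and identification of where the real difficulty lies (Lemma~\ref{lem:LP to TOC}) are both accurate.
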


Bringing everything together, Lemmas~\ref{lem:LP-NPhard}, \ref{lem:NP}, and \ref{lemma:hierarchy} directly lead to Theorem~\ref{thm:main}. Moreover, a closer look at our reduction can slightly improve Lemma~\ref{lem:LP-NPhard}.

\unary*

\begin{proof}
    It follows directly from the proof of Lemma~\ref{lem:NP} that the problem is in NP. It remains to show that it is NP-hard, using our reduction described above.

    First consider the NP-hardness reduction of the TOC problem provided by Shah and Farach-Colton in~\cite{SF06}.
    In their proof, the authors show that for the set of triangle orders $(S,\prec_{S})$ which can arise from their reduction, $(S,\prec_{S})$ is realizable if and only if there exists a weighted tree $T_{S}$ which realizes $(S,\prec_{S})$ where $|V(T_S)|$ and all edge weights of $T_{S}$ are polynomial with respect to $|S|$. In particular, $\diam(T_{S})$ is also polynomial with respect to $|S|$.
    Let $c$ be a positive constant such that for any instance $(S, \prec_S)$ obtained from the reduction of~\cite{SF06},  $|S|^c$ is an upper bound on the diameter of a weighted tree that realizes $(S, \prec_S)$, if one exists.

    Coming back to our reduction, consider the graph $G_S$ constructed from $(S, \prec_S)$.  We argued that if $(S, \prec_S)$ is realizable by some weighted tree $T_S$, then $G_S$ is a $(10 diam(T_S) - 1)$-leaf power. 
    We claim that $G_S$ is also a $(10 |S|^c - 1)$-leaf power.
    Indeed, it is known that for any $k \geq 2$, a $k$-leaf power is also a $(k + 2)$-leaf power, and thus it is a $(k + 2d)$-leaf power for any integer $d \geq 0$~\cite{brandstadt2008k}. 
    If $G_S$ is a $(10 diam(T_S) - 1)$-leaf power, it is also a $(10 |S|^c - 1)$-leaf power, since we can add $2$ to $10 diam(T_S) - 1$ as many times as needed to reach $10 |S|^c - 1$ (note that the parity works out).

    We can therefore show that unary-$k$-leaf power is NP-hard as follows.  From a TOC instance $(S, \prec_S)$, construct graph $G_S$ and put $k = 10 |S|^c - 1$. 
    If $(S, \prec_S)$ is realizable by some tree $T_S$, then from Lemma~\ref{lem:TOC to LP} we have that $G_S$ is a $(10 diam(T_S) - 1)$-leaf power and therefore a $k$-leaf power. 
    Conversely, if $(S, \prec_S)$ is not realizable, by contraposing Lemma~\ref{lem:LP to TOC} we have that $G$ is not a leaf power.  In particular, $G_S$ is not a $k$-leaf power. 
    Since $|V(G_S)|$ and $k$ are polynomial with respect to $|S|$, this implies that the $k$-leaf power recognition problem is NP-complete when $k$ is part of the input and polynomial in the size of the input graph.
    %
    %
    %
    %
\end{proof}

\section{Conclusion}

Our results solve a longstanding problem on the complexity of recognizing leaf powers. In addition, we have shown that the hardness extends to generalizations of leaf powers based on tree distances lying in given intervals, including pairwise compatibility graphs for which the algorithmic literature was very sparse.  
There are still several directions to explore regarding the complexity aspects of these graph classes.  Notably, $k$-leaf power recognition is in P for fixed $k$~\cite{Lafond2023} but NP-complete for given $k$, but it is still unknown whether the problem is fixed-parameter tractable.  That is, can $k$-leaf powers be recognized in time $f(k) poly(n)$ for some function $f$ not depending on $n$?  A positive result would provide hope in using leaf powers in practice despite our hardness result.  Likewise, the running time of Lafond's algorithm is $n^{g(k)}$ for a function $g$ that is a tower of exponential it is open whether this time complexity can be avoided.

Another direction is to investigate the question of leaf ranks.  As previously mentioned, it is open whether every leaf power is a $k$-leaf power for some $k$ bounded by $2^{O(n)}$.  Moreover, only trivially perfect graphs are known to admit a polynomial-time algorithm for the leaf rank computation, and the complexity remains open even for simple graphs classes known to be leaf powers.  This includes, for example, unit interval graphs, and possible directions are discussed in~\cite{hogemo2025mapping}.

Let us finally mention that the ideas developed here could be applied to other graph-to-tree embedding problems.  To our knowledge, the TOC problem had not been used for other hardness results, and could prove useful in the future.  In the recent~\cite{hogemo2025mapping}, several ways of mapping graphs to trees are presented, with many open complexity problems.  One interesting example concerns graphs of mim-width $1$.  The complexity of recognizing graphs of constant mim-width has been open since their introduction~\cite{jaffke2020mimI,jaffke2020mimII}, with a recent breakthrough showing hardness for graphs of mim-width 1211~\cite{bergougnoux2025mim}. Since all leaf powers have mim-width 1, it might be possible to modify our reduction to extend it to a hardness result for this graph class.

\bibliographystyle{plain}
\bibliography{references}

\begin{thebibliography}{10}

\bibitem{MultiPCG}
Shareef Ahmed and Md.~Saidur Rahman.
\newblock Multi-interval pairwise compatibility graphs.
\newblock In T.V. Gopal, Gerhard J{\"a}ger, and Silvia Steila, editors, {\em
  Theory and Applications of Models of Computation}, pages 71--84, Cham, 2017.
  Springer International Publishing.

\bibitem{bergougnoux2025mim}
Benjamin Bergougnoux, \'{E}douard Bonnet, and Julien Duron.
\newblock {Mim-Width Is paraNP-Complete}.
\newblock In K.~Censor-Hillel, F.~Grandoni, J.~Ouaknine, and G.~Puppis,
  editors, {\em 52nd International Colloquium on Automata, Languages, and
  Programming (ICALP 2025)}, volume 334 of {\em Leibniz International
  Proceedings in Informatics (LIPIcs)}, pages 25:1--25:17, Dagstuhl, Germany,
  2025. Schloss Dagstuhl -- Leibniz-Zentrum f{\"u}r Informatik.

\bibitem{benjamin2022recognition}
Benjamin Bergougnoux, Svein H{\o}gemo, Jan~Arne Telle, and Martin Vatshelle.
\newblock Recognition of linear and star variants of leaf powers is in {P}.
\newblock In {\em International Workshop on Graph-Theoretic Concepts in
  Computer Science}, pages 70--83. Springer, 2022.

\bibitem{brandstadt2006structure}
Andreas Brandst{\"a}dt et~al.
\newblock Structure and linear time recognition of 3-leaf powers.
\newblock {\em Information Processing Letters}, 98(4):133--138, 2006.

\bibitem{brandstadt2010rooted}
Andreas Brandst{\"a}dt, Christian Hundt, Federico Mancini, and Peter Wagner.
\newblock Rooted directed path graphs are leaf powers.
\newblock {\em Discrete Mathematics}, 310(4):897--910, 2010.

\bibitem{brandstadt2008structure}
Andreas Brandst{\"a}dt, Van~Bang Le, and Rajevaa Sritharan.
\newblock Structure and linear-time recognition of 4-leaf powers.
\newblock {\em ACM Transactions on Algorithms (TALG)}, 5(1):1--22, 2008.

\bibitem{brandstadt2008k}
Andreas Brandst{\"a}dt and Peter Wagner.
\newblock On k-versus (k+ 1)-leaf powers.
\newblock In {\em Combinatorial Optimization and Applications: Second
  International Conference, COCOA 2008, St. John’s, NL, Canada, August 21-24,
  2008. Proceedings 2}, pages 171--179. Springer, 2008.

\bibitem{BUNEMAN197448}
Peter Buneman.
\newblock A note on the metric properties of trees.
\newblock {\em Journal of Combinatorial Theory, Series B}, 17(1):48--50, 1974.

\bibitem{7verticesPCG}
Tiziana Calamoneri, Dario Frascaria, and Blerina Sinaimeri.
\newblock All graphs with at most seven vertices are pairwise compatibility
  graphs.
\newblock {\em The Computer Journal}, 56(7):882--886, 2013.

\bibitem{GeneralizePCG}
Tiziana Calamoneri, Manuel Lafond, Angelo Monti, and Blerina Sinaimeri.
\newblock {On Generalizations of Pairwise Compatibility Graphs}.
\newblock {\em {Discrete Mathematics \& Theoretical Computer Science}}, {vol.
  26:3}, October 2024.

\bibitem{chang20073}
Maw-Shang Chang and Ming-Tat Ko.
\newblock The 3-steiner root problem.
\newblock In {\em International Workshop on Graph-Theoretic Concepts in
  Computer Science}, pages 109--120. Springer, 2007.

\bibitem{courcelle2000linear}
Bruno Courcelle, Johann~A Makowsky, and Udi Rotics.
\newblock Linear time solvable optimization problems on graphs of bounded
  clique-width.
\newblock {\em Theory of Computing Systems}, 33(2):125--150, 2000.

\bibitem{ducoffe20194}
Guillaume Ducoffe.
\newblock The 4-steiner root problem.
\newblock In {\em International Workshop on Graph-Theoretic Concepts in
  Computer Science}, pages 14--26. Springer, 2019.

\bibitem{dupre2024k}
Max Dupr\'{e}~la Tour, Manuel Lafond, Ndiam\'{e} Ndiaye, and Adrian Vetta.
\newblock {k-Leaf Powers Cannot Be Characterized by a Finite Set of Forbidden
  Induced Subgraphs for k $\geq$ 5}.
\newblock In K.~Censor-Hillel, F.~Grandoni, J.~Ouaknine, and G.~Puppis,
  editors, {\em 52nd International Colloquium on Automata, Languages, and
  Programming (ICALP 2025)}, volume 334 of {\em Leibniz International
  Proceedings in Informatics (LIPIcs)}, pages 72:1--72:17, Dagstuhl, Germany,
  2025. Schloss Dagstuhl -- Leibniz-Zentrum f{\"u}r Informatik.

\bibitem{durocher2015graphs}
Stephane Durocher, Debajyoti Mondal, and Md~Saidur Rahman.
\newblock On graphs that are not pcgs.
\newblock {\em Theoretical Computer Science}, 571:78--87, 2015.

\bibitem{fellows2008leaf}
Michael~R Fellows, Daniel Meister, Frances~A Rosamond, R~Sritharan, and
  Jan~Arne Telle.
\newblock Leaf powers and their properties: Using the trees.
\newblock In {\em International Symposium on Algorithms and Computation}, pages
  402--413. Springer, 2008.

\bibitem{GOLUMBIC1995449}
Martin~Charles Golumbic, Haim Kaplan, and Ron Shamir.
\newblock Graph sandwich problems.
\newblock {\em Journal of Algorithms}, 19(3):449--473, 1995.

\bibitem{LowerBoundLR}
Svein H{\o}gemo.
\newblock Lower bounds for leaf rank of leaf powers.
\newblock In Adele~Anna Rescigno and Ugo Vaccaro, editors, {\em Combinatorial
  Algorithms}, pages 341--353, Cham, 2024. Springer Nature Switzerland.

\bibitem{hogemo2025mapping}
Svein H{\o}gemo.
\newblock {\em Mapping Graphs to Trees: Partition Trees, Leaf Powers and
  Decompositions}.
\newblock PhD thesis, The University of Bergen, 2025.

\bibitem{jaffke2019mimIII}
Lars Jaffke, O-joung Kwon, Torstein~JF Str{\o}mme, and Jan~Arne Telle.
\newblock Mim-width iii. graph powers and generalized distance domination
  problems.
\newblock {\em Theoretical Computer Science}, 796:216--236, 2019.

\bibitem{jaffke2020mimI}
Lars Jaffke, O-joung Kwon, and Jan~Arne Telle.
\newblock Mim-width i. induced path problems.
\newblock {\em Discrete Applied Mathematics}, 278:153--168, 2020.

\bibitem{jaffke2020mimII}
Lars Jaffke, O-joung Kwon, and Jan~Arne Telle.
\newblock Mim-width ii. the feedback vertex set problem.
\newblock {\em Algorithmica}, 82(1):118--145, 2020.

\bibitem{JamisonSpragueMultiT}
Robert~E. Jamison and Alan~P. Sprague.
\newblock Multithreshold graphs.
\newblock {\em Journal of Graph Theory}, 94(4):518--530, 2020.

\bibitem{Kearney2003}
Paul Kearney, J.~Ian Munro, and Derek Phillips.
\newblock Efficient generation of uniform samples from phylogenetic trees.
\newblock In Gary Benson and Roderic D.~M. Page, editors, {\em Algorithms in
  Bioinformatics}, pages 177--189, Berlin, Heidelberg, 2003. Springer Berlin
  Heidelberg.

\bibitem{kobayashi2022linear}
Yusuke Kobayashi, Yoshio Okamoto, Yota Otachi, and Yushi Uno.
\newblock Linear-time recognition of double-threshold graphs.
\newblock {\em Algorithmica}, 84(4):1163--1181, 2022.

\bibitem{lafond2017strongly}
Manuel Lafond.
\newblock On strongly chordal graphs that are not leaf powers.
\newblock In {\em Graph-Theoretic Concepts in Computer Science: 43rd
  International Workshop, WG 2017, Eindhoven, The Netherlands, June 21-23,
  2017, Revised Selected Papers 43}, pages 386--398. Springer, 2017.

\bibitem{Lafond2023}
Manuel Lafond.
\newblock Recognizing $k$-leaf powers in polynomial time, for constant $k$.
\newblock {\em ACM Trans. Algorithms}, 19(4), 9 2023.

\bibitem{le2023computing}
Van~Bang Le and Christian Rosenke.
\newblock Computing optimal leaf roots of chordal cographs in linear time.
\newblock In {\em International Symposium on Fundamentals of Computation
  Theory}, pages 348--362. Springer, 2023.

\bibitem{Lenstra}
H.~W. Lenstra.
\newblock Integer programming with a fixed number of variables.
\newblock {\em Mathematics of Operations Research}, 8(4):538--548, 1983.

\bibitem{nevries2016towards}
Ragnar Nevries and Christian Rosenke.
\newblock Towards a characterization of leaf powers by clique arrangements.
\newblock {\em Graphs and Combinatorics}, 32:2053--2077, 2016.

\bibitem{NISHIMURA200269}
Naomi Nishimura, Prabhakar Ragde, and Dimitrios~M. Thilikos.
\newblock On graph powers for leaf-labeled trees.
\newblock {\em Journal of Algorithms}, 42(1):69--108, 2002.

\bibitem{Ray92}
Arundhati Raychaudhuri.
\newblock On powers of strongly chordal and circular arc graphs.
\newblock {\em Ars Combin.}, 34:147--160, 1992.

\bibitem{SF06}
Rahul Shah and Martin Farach-Colton.
\newblock On the complexity of ordinal clustering.
\newblock {\em Journal of Classification}, 23:79--102, 02 2006.

\bibitem{steel1992complexity}
Michael Steel.
\newblock The complexity of reconstructing trees from qualitative characters
  and subtrees.
\newblock {\em Journal of classification}, 9:91--116, 1992.

\bibitem{WB09}
Peter Wagner and Andreas Brandstädt.
\newblock The complete inclusion structure of leaf power classes.
\newblock {\em Theoretical Computer Science}, 410(52):5505--5514, 2009.
\newblock Combinatorial Optimization and Applications.

\bibitem{xiao2020characterizing}
Mingyu Xiao and Hiroshi Nagamochi.
\newblock Characterizing star-pcgs.
\newblock {\em Algorithmica}, 82(10):3066--3090, 2020.

\end{thebibliography}

\end{document}